\documentclass{opt2026} %

\usepackage{bm}
\usepackage{mathtools}
\usepackage{booktabs}
\usepackage{enumitem}
\usepackage{xcolor}
\usepackage[normalem]{ulem}
\usepackage{graphicx}
\usepackage{subcaption}

\usepackage{float}

\newcommand{\R}{\mathbb{R}}
\newcommand{\ip}[2]{\langle #1,#2\rangle}
\newcommand{\norm}[1]{\lVert #1\rVert}
\newcommand{\dist}{\operatorname{dist}}
\newcommand{\argmax}{\operatorname*{arg\,max}}
\newcommand{\proj}{\operatorname{proj}}

\newcommand{\tighteq}{\mkern-2mu=\mkern-2mu}
\newcommand{\tightminus}{\mkern-2mu-\mkern-2mu}

\title[Primal-Dual Adaptive Subgradient Methods]{Primal-Dual Adaptive Subgradient Methods}

\optauthor{%
\Name{Karthik Prakhya} \Email{karthik.prakhya@umu.se}\\
\Name{Zimeng Wang} \Email{zimeng.wang@umu.se}\\
\Name{Alp Yurtsever} \Email{alp.yurtsever@umu.se}\\
\addr Ume{\aa} University, Sweden}

\begin{document}

\maketitle

\begin{abstract}
    We propose adaptive primal-dual subgradient methods for constrained convex optimization.
    Our first method applies an AdaGrad-style step-sizes to the dual problem and recovers primal solutions by averaging the associated Fenchel-type oracle outputs.
    It achieves $\widetilde{\mathcal{O}}(T^{-1/2})$ convergence for the dual objective residual, primal objective residual, and feasibility gap in the nonsmooth regime, while automatically improving to $\mathcal{O}(T^{-1})$ when the dual objective is smooth.
    We further develop an accelerated variant, which retains the $\widetilde{\mathcal{O}}(T^{-1/2})$ nonsmooth rate while achieving $\mathcal{O}(T^{-2})$ convergence in the smooth regime under mild assumptions.
    These guarantees require no prior knowledge of the underlying regularity.
    We demonstrate the empirical performance of the proposed methods on optimal transport and semidefinite relaxation of the Max-Cut problem.
\end{abstract}

\section{Introduction}
\label{sec:introduction}

Consider convex constrained optimization problems of the form
\begin{equation}
\label{eqn:problem-setup}
\tag{P}
    \min_{\bm{x}\in\mathcal{X}} f(\bm{x}) \quad \mathrm{s.t.} \quad \bm{A}\bm{x}-\bm{b}\in\mathcal{K},
\end{equation}
where $f:\R^p\to\R\cup\{+\infty\}$ is proper, closed, and convex;  $\mathcal{X}\subseteq\R^p$ and $\mathcal{K}\subseteq\R^n$ are convex and compact sets; $\bm{A}\in\R^{n\times p}$ and $\bm{b}\in\R^n$. 
Primal-dual subgradient methods provide a simple and scalable approach to this class of problems, as their iterations typically rely only on inexpensive first-order and Fenchel-type oracle evaluations. 
Beyond their low per-iteration arithmetic cost, they are also amenable to memory-efficient implementations based on sketching and dimensionality reduction when primal recovery is performed through averaging \citep{yurtsever2021scalable}. 

A central challenge in practice for subgradient methods is the choice of step-size. 
Classical convergence guarantees typically rely on a decreasing step-size schedules designed for the worst-case nonsmooth regime. 
Nonsmooth convex optimization is a broad class however, and individual instances can exhibit substantially more favorable regularity than the worst-case model.
Adaptive methods such as AdaGrad \cite{duchi2011adaptive,levy2017online}, instead determine their step-sizes from the observed gradients and can therefore exploit additional smoothness when present, achieving faster convergence rates without prior knowledge of the underlying smoothness degree.

This motivates the central question of this paper:
\begin{center}
\textit{Can AdaGrad-style step-sizes be used in the dual space while still providing \\ adaptive guarantees for the recovered primal objective residual and feasibility gap?}
\end{center}
Our answer is affirmative, for both basic and accelerated primal-dual constructions.
We apply AdaGrad-type step-sizes to the unconstrained dual formulation of \eqref{eqn:problem-setup}, while recovering primal iterates from the associated Fenchel-type oracles via averaging. 
Our first method, \emph{Primal–Dual AdaGrad}, simultaneously achieves $\widetilde{\mathcal{O}}(T^{-1/2})$ convergence of the dual objective residual, primal objective residual, and feasibility gap for nonsmooth problems, and automatically improves to $\mathcal O(T^{-1})$ when the dual objective is smooth. 
We further develop an accelerated variant, \emph{Primal-Dual AcceleGrad}, which retains the $\widetilde{\mathcal O}(T^{-1/2})$ nonsmooth guarantee while achieving $\mathcal O(T^{-2})$ rates in the smooth regime. 

\paragraph{Related works.}

Duchi et al.~\citep{duchi2011adaptive} introduced AdaGrad, pioneering adaptive step sizes based on accumulated gradient norms. 
While originally developed for online and stochastic optimization, this mechanism has subsequently been used to obtain universal methods that adapt to different smoothness and noise regimes~\citep{levy2017online}. 
In particular, AcceleGrad~\citep{levy2018online} combines AdaGrad-style adaptation with acceleration, attaining guarantees for both smooth and nonsmooth convex optimization without prior knowledge of the smoothness regime or its constants.
AdaGrad-type adaptation has also been extended to constrained optimization using projection and proximal operations. 
Kavis et al.~\citep{kavis2019unixgrad} proposed UniXGrad for constrained convex optimization, while Bach and Levy~\citep{bach2019universal} developed a universal Mirror-Prox method for variational inequalities. 
Both methods use extra-gradient-type updates and adapt to smooth and nonsmooth regimes as well as deterministic and stochastic oracles. 
Antonakopoulos et al.~\citep{antonakopoulos2021adaptive} further developed adaptive extra-gradient methods for min-max optimization and games. 
Ene et al.~\citep{ene2021adaptive} developed universal adaptive methods for constrained convex optimization based on adaptive projected updates, including accelerated variants, while Wang and Yurtsever~\citep{wang2026universal} developed universal adaptive methods for composite objectives. 
In contrast, our methods address constrained problems through their unconstrained dual and require only Fenchel-type oracles, rather than projections onto or proximal updates over the constraint set.

A complementary approach to universality originates with Nesterov~\citep{nesterov2015universal}. 
His universal gradient methods adapt to unknown H\"older smoothness regime and constants through an inexact line-search strategy. 
Yurtsever et al.~\citep{yurtsever2015universal} extended this approach to constrained convex optimization using a primal-dual framework with Fenchel-type oracles, providing guarantees for both the objective residual and feasibility gap. 
Their methods, however, rely on an inexact line-search strategy that requires objective evaluations and can be costly or impractical when such evaluations are expensive. 
Moreover, they require a target accuracy to be specified in advance and converge only to a neighborhood determined by this accuracy, making it difficult to run them to arbitrary accuracy without prior knowledge of the convergence regime. 
Our methods retain the same primal-dual oracle framework while avoiding line searches, objective evaluations, and a prescribed target accuracy, and adapt automatically to the underlying smoothness regime. 

\section{Algorithms and Main Results}
\label{sec:algorithms-results}

In this section, we present our adaptive primal-dual methods and their convergence guarantees. 
Our goal is to approximately solve \eqref{eqn:problem-setup} to obtain $\bm{x}_\epsilon$ in the following sense:
\begin{definition}
Let $f_\star$ denote the optimal value of \eqref{eqn:problem-setup}. 
Given an accuracy level $\epsilon > 0$, a point $\bm{x}_\epsilon \in \mathcal{X}$ is said to be an $\epsilon$-solution of \eqref{eqn:problem-setup} if
\begin{equation*}
f(\bm{x}_\epsilon) - f_\star \leq \epsilon, \quad \text{and} \quad \mathrm{dist}(\bm{A}\bm{x}_\epsilon - \bm{b}, \mathcal{K}) \leq \epsilon.
\end{equation*}
We refer to $|f(\bm{x}) \tightminus f_\star|$ as the primal objective residual and to $\mathrm{dist}(\bm{A}\bm{x}\tightminus\bm{b},\mathcal{K})$ as feasibility gap.
\end{definition}

Our methods apply adaptive subgradient schemes \cite{levy2017online,levy2018online} to an unconstrained dual formulation of \eqref{eqn:problem-setup}. 
To derive this formulation, we introduce a slack variable $\bm{r}\in\mathcal{K}$ and equivalently write \eqref{eqn:problem-setup} as
\begin{equation}
    \label{eq:problem-slack}
    \min_{\bm{x}\in\mathcal{X},\,\bm{r}\in\mathcal{K}} f(\bm{x}) \quad \mathrm{s.t.} \quad \bm{A}\bm{x}-\bm{r}=\bm{b}.
\end{equation}
The resulting dual problem (written in minimization form for convenience) is
\begin{equation}\label{dual-problem-setup} \tag{D}
    \min_{\bm{\lambda}\in\R^n} g(\bm{\lambda}) := 
    \underbrace{\max_{\bm{x}\in\mathcal{X}}\{\ip{\bm{\lambda}}{-\bm{A}\bm{x}}-f(\bm{x})\}}_{g_x(\bm{\lambda})}
    + \ip{\bm{\lambda}}{\bm{b}} 
    + \underbrace{\max_{\bm{r}\in\mathcal{K}}\ip{\bm{\lambda}}{\bm{r}}}_{g_r(\bm{\lambda})}.
\end{equation}
Throughout the paper, we assume that the primal problem is feasible, strong duality holds (i.e., $f_\star = -g_\star$), and the dual optimum is attained. We denote a dual optimal solution by $\bm{\lambda}_\star$.

Our methods construct dual subgradients through Fenchel-type oracle evaluations. 
For a given dual point $\bm{\lambda}$, let 
\begin{equation*}
    \bm{x}^*(\bm{\lambda})\in\argmax_{\bm{x}\in\mathcal{X}}\{\ip{\bm{\lambda}}{-\bm{A}\bm{x}}-f(\bm{x})\}, \qquad 
    \bm{r}^*(\bm{\lambda})\in\argmax_{\bm{r}\in\mathcal{K}}\ip{\bm{\lambda}}{\bm{r}}
\end{equation*}
denote arbitrary maximizers. 
Then, by Danskin's theorem, $-\bm{A}\bm{x}^*(\bm{\lambda})+\bm{b}+\bm{r}^*(\bm{\lambda}) \in \partial{g}(\bm{\lambda})$. 

\subsection{Primal-Dual AdaGrad}
\label{subsec:pd-adagrad}

Our first algorithm applies a scalar AdaGrad step-size to the unconstrained dual objective and recovers primal solutions by averaging the Fenchel-type oracle outputs, as shown in Algorithm~\ref{alg:pd-adagrad}.

\begin{algorithm2e}[t]
\caption{Primal-Dual AdaGrad}
\label{alg:pd-adagrad}
\KwIn{Initial dual point $\bm{\lambda}_0$, scale parameter $\eta>0$, accumulator offset $\delta>0$.}
\For{$t=0,1,\ldots,T-1$}{
Compute $\bm{x}^*_t=\bm{x}^*(\bm{\lambda}_t)$ and $\bm{r}^*_t=\bm{r}^*(\bm{\lambda}_t)$\;
Set $\bm{g}_t=-\bm{A}\bm{x}^*_t+\bm{b}+\bm{r}^*_t$\;
Set $S_t=\delta+\sum_{\tau=0}^{t}\norm{\bm{g}_\tau}^2$ and $\eta_t=\eta/\sqrt{S_t}$\;
Update $\bm{\lambda}_{t+1}=\bm{\lambda}_t-\eta_t\bm{g}_t$\;
}
\KwOut{$\bar{\bm{\lambda}}_T=A_T^{-1}\sum_{t=0}^{T-1}\eta_t\bm{\lambda}_t$ and $\bar{\bm{x}}_T=A_T^{-1}\sum_{t=0}^{T-1}\eta_t\bm{x}^*_t$, where $A_T=\sum_{t=0}^{T-1}\eta_t$.}
\end{algorithm2e}

\begin{theorem}[Primal-Dual AdaGrad]
\label{thm:pd-adagrad}
Let the iterates be generated by Algorithm~\ref{alg:pd-adagrad} for $T\ge 1$. %
If $g$ is $G$-Lipschitz, then
\begin{equation*}
    g(\bar{\bm{\lambda}}_T)-g_\star=\widetilde{\mathcal{O}}\!\left(\frac{1}{\sqrt{T}}\right),\quad f(\bar{\bm{x}}_T)-f_\star=\widetilde{\mathcal{O}}\!\left(\frac{1}{\sqrt{T}}\right),\quad \dist(\bm{A}\bar{\bm{x}}_T-\bm{b},\mathcal{K})=\widetilde{\mathcal{O}}\!\left(\frac{1}{\sqrt{T}}\right).
\end{equation*}
If instead $g$ is differentiable and $L$-smooth, then
\begin{equation*}
    g(\bar{\bm{\lambda}}_T)-g_\star=\mathcal{O}\!\left(\frac{1}{T}\right),\quad f(\bar{\bm{x}}_T)-f_\star=\mathcal{O}\!\left(\frac{1}{T}\right),\quad \dist(\bm{A}\bar{\bm{x}}_T-\bm{b},\mathcal{K})=\mathcal{O}\!\left(\frac{1}{T}\right).
\end{equation*}
\end{theorem}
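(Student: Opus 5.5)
The plan is to derive a single weighted regret inequality for the dual AdaGrad iterates that holds for every comparator $\bm{\lambda}\in\R^n$. Each of the three bounds will then follow by evaluating this inequality at a suitable comparator.

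\textbf{Step 1: weighted regret bound.} From $\bm{\lambda}_{t+1}=\bm{\lambda}_t-\eta_t\bm{g}_t$ we get the standard one-step identity
\[
\eta_t\ip{\bm{g}_t}{\bm{\lambda}_t-\bm{\lambda}} = \tfrac12\norm{\bm{\lambda}_t-\bm{\lambda}}^2-\tfrac12\norm{\bm{\lambda}_{t+1}-\bm{\lambda}}^2+\tfrac12\eta_t^2\norm{\bm{g}_t}^2 .
\]
The weights are $\eta_t$, so the distance terms telescope exactly; this is the reason for the $\eta_t$-weighted averages in Algorithm~\ref{alg:pd-adagrad}. Summing gives
\[
\sum_t\eta_t\ip{\bm{g}_t}{\bm{\lambda}_t-\bm{\lambda}}\le \tfrac12\norm{\bm{\lambda}_0-\bm{\lambda}}^2+\tfrac{\eta^2}2\textstyle\sum_t\norm{\bm{g}_t}^2/S_t ,
\]
and the last sum is at most $\log(S_{T-1}/\delta)$.

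Next we use the linear structure of the oracles. Let $\phi(\bm{x},\bm{r},\bm{\lambda})=\ip{\bm{\lambda}}{-\bm{A}\bm{x}+\bm{b}+\bm{r}}-f(\bm{x})$. Then $g(\bm{\lambda}_t)=\phi(\bm{x}^*_t,\bm{r}^*_t,\bm{\lambda}_t)$, and $\phi(\bm{x}^*_t,\bm{r}^*_t,\bm{\lambda})=g(\bm{\lambda}_t)+\ip{\bm{g}_t}{\bm{\lambda}-\bm{\lambda}_t}$. Hence the left-hand side of the regret bound equals $\sum_t\eta_t\big[g(\bm{\lambda}_t)-\phi(\bm{x}^*_t,\bm{r}^*_t,\bm{\lambda})\big]$. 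Dividing by $A_T$, using convexity of $g$ and concavity of $\phi$ in $(\bm{x},\bm{r})$ (Jensen with $\bar{\bm{r}}_T$ defined analogously), we obtain
\[
g(\bar{\bm{\lambda}}_T)+f(\bar{\bm{x}}_T)-\ip{\bm{\lambda}}{-\bm{A}\bar{\bm{x}}_T+\bm{b}+\bar{\bm{r}}_T}\le R_T(\bm{\lambda}):=\frac{\norm{\bm{\lambda}_0-\bm{\lambda}}^2+\eta^2\log(S_{T-1}/\delta)}{2A_T}.
\]

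\textbf{Step 2: extracting the three guarantees.} For the dual residual, use $g(\bar{\bm{\lambda}}_T)\le A_T^{-1}\sum_t\eta_t g(\bm{\lambda}_t)$ together with $\bm{\lambda}=\bm{\lambda}_\star$, or equivalently bound the left side below via $f(\bar{\bm{x}}_T)-\ip{\bm{\lambda}_\star}{\cdots}\ge -g_\star$, since $\bar{\bm{x}}_T\in\mathcal{X}$ and $\bar{\bm{r}}_T\in\mathcal{K}$. This yields $g(\bar{\bm{\lambda}}_T)-g_\star\le R_T(\bm{\lambda}_\star)$.

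For the primal residual, take $\bm{\lambda}=0$ and use $g(\bar{\bm{\lambda}}_T)\ge g_\star=-f_\star$, which gives $f(\bar{\bm{x}}_T)-f_\star\le R_T(0)$.

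For feasibility, let $\bm{v}=\bm{A}\bar{\bm{x}}_T-\bm{b}-\bar{\bm{r}}_T$; then $\dist(\bm{A}\bar{\bm{x}}_T-\bm{b},\mathcal{K})\le\norm{\bm{v}}$. Choose $\bm{\lambda}=\bm{\lambda}_\star-\rho\,\bm{v}/\norm{\bm{v}}$ for a constant $\rho$, e.g.\ $\rho=\norm{\bm{\lambda}_\star}+1$. Saddle-point duality gives $f(\bar{\bm{x}}_T)+\ip{\bm{\lambda}_\star}{\bm{v}}\ge f_\star$ and $g(\bar{\bm{\lambda}}_T)\ge -f_\star$, so $\rho\norm{\bm{v}}\le R_T(\bm{\lambda})$. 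Because $\norm{\bm{\lambda}_0-\bm{\lambda}}$ is bounded by a constant, this gives $\norm{\bm{v}}=\mathcal{O}(R_T)$. The same argument also yields the lower bound $f(\bar{\bm{x}}_T)-f_\star\ge-\norm{\bm{\lambda}_\star}\norm{\bm{v}}$.

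\textbf{Step 3: bounding $1/A_T$ in each regime; this is the main obstacle.} In the Lipschitz case, $S_t\le\delta+G^2T$ gives $A_T\ge \eta T/\sqrt{\delta+G^2T}=\Omega(\sqrt T)$ and $\log S_{T-1}=\mathcal{O}(\log T)$, hence the $\widetilde{\mathcal{O}}(T^{-1/2})$ rates. In the smooth case we need $S_T$ to stay bounded, which gives $A_T=\Omega(T)$ and hence $\mathcal{O}(1/T)$ rates with the log term becoming a constant.

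To show $S_T$ is bounded, I would follow the standard AdaGrad argument. First, bound $\norm{\bm{\lambda}_t-\bm{\lambda}_\star}$ uniformly, using the regret identity with $\bm{\lambda}=\bm{\lambda}_\star$ and $\ip{\bm{g}_t}{\bm{\lambda}_t-\bm{\lambda}_\star}\ge0$; this gives $\norm{\bm{\lambda}_t-\bm{\lambda}_\star}^2\le \norm{\bm{\lambda}_0-\bm{\lambda}_\star}^2+\eta^2\log(S_{T}/\delta)$. Second, apply co-coercivity $\ip{\bm{g}_t}{\bm{\lambda}_t-\bm{\lambda}_\star}\ge \norm{\bm{g}_t}^2/L$. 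This turns the weighted regret into $\sum_t\norm{\bm{g}_t}^2/(L\sqrt{S_t})\le \sqrt{S_T}/(L\sqrt{\cdot})$-type terms, leading to an inequality of the form $\sqrt{S_T}\lesssim L(D^2+\eta^2\log S_T)/\eta$ (with $D=\norm{\bm{\lambda}_0-\bm{\lambda}_\star}$, after using $\sum_t\norm{\bm{g}_t}^2/\sqrt{S_t}\ge\sqrt{S_T}-\sqrt\delta$ and $\eta_t\ge\eta_T$). That implicit inequality forces $S_T=\mathcal{O}(1)$.

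The delicate part is getting this self-bounding inequality cleanly, since the iterate bound itself contains $\log S_T$. I would first try to close it via $\log x\le c\sqrt{x}$ with a small constant, absorbing the term into the left side. With $S_T$ bounded, the constants in $R_T(\bm{\lambda}_\star)$, $R_T(0)$ and $R_T(\bm{\lambda})$ are all $\mathcal{O}(1/T)$.
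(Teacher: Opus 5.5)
Your proposal is sound apart from one sign error, and its skeleton matches the paper's. The shared pieces are the same $\eta_t$-weighted regret inequality, the comparators $\bm{\lambda}_\star$ and $\bm{0}$ for the dual and primal residuals, and identical Lipschitz-case estimates. The error is in the feasibility comparator, and it needs fixing. Your gap inequality reads $g(\bar{\bm{\lambda}}_T)+f(\bar{\bm{x}}_T)+\ip{\bm{\lambda}}{\bm{v}}\le R_T(\bm{\lambda})$. Since $g(\bar{\bm{\lambda}}_T)+f(\bar{\bm{x}}_T)+\ip{\bm{\lambda}_\star}{\bm{v}}\ge 0$, what you actually obtain is $\ip{\bm{\lambda}-\bm{\lambda}_\star}{\bm{v}}\le R_T(\bm{\lambda})$. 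With your choice $\bm{\lambda}=\bm{\lambda}_\star-\rho\bm{v}/\norm{\bm{v}}$ the left side equals $-\rho\norm{\bm{v}}$, and nothing follows. You need $\bm{\lambda}=\bm{\lambda}_\star+\rho\bm{v}/\norm{\bm{v}}$, which is the paper's $\bm{\lambda}_\star-\rho\bar{\bm{g}}_T/\norm{\bar{\bm{g}}_T}$ since $\bm{v}=-\bar{\bm{g}}_T$, with $\bm{v}=\bm{0}$ handled separately. Your extra lower bound $f(\bar{\bm{x}}_T)-f_\star\ge-\norm{\bm{\lambda}_\star}\norm{\bm{v}}$ is a small bonus; the paper only bounds the primal residual from above.

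With that fixed, two steps genuinely differ from the paper.

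\textbf{Feasibility.} The paper uses the unbounded comparator $\bm{\lambda}=\bm{\lambda}_0-A_T\bar{\bm{g}}_T$ and solves a quadratic in $A_T\norm{\bar{\bm{g}}_T}$. This gives $A_T^{-1}\bigl(2\norm{\bm{\lambda}_0-\bm{\lambda}_\star}+\sqrt{V_T}\bigr)$ with $V_T=\sum_t\eta_t^2\norm{\bm{g}_t}^2$. Your fixed-radius direction trick is the one the paper reserves for AcceleGrad, where comparators must stay in $\mathcal{D}$. It gives $\bigl((\norm{\bm{\lambda}_0-\bm{\lambda}_\star}+\rho)^2+V_T\bigr)/(2\rho A_T)$. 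In the nonsmooth case this carries $\log T$ instead of $\sqrt{\log T}$, which is still within $\widetilde{\mathcal{O}}(T^{-1/2})$. Choosing $\rho=\sqrt{\norm{\bm{\lambda}_0-\bm{\lambda}_\star}^2+V_T}$ recovers the paper's bound exactly.

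\textbf{Smooth case.} The paper splits at the last index $t_L$ with $\eta_t>1/(2L)$. It bounds the prefix of $V_T$ by $\eta^2\log(\bar S/\delta)$ and absorbs the tail into the regret. This bounds $V_T$, and then $S_{T-1}$, by explicit $T$-independent constants without any self-referential logarithm.

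Your route inserts co-coercivity $\norm{\bm{g}_t}^2\le L\ip{\bm{g}_t}{\bm{\lambda}_t-\bm{\lambda}_\star}$ directly into the regret bound at $\bm{\lambda}_\star$. Combined with the lower bound in Lemma~\ref{lem:supp-adagrad-sums}, it gives $\sqrt{S_{T-1}}\le\sqrt{\delta}+\frac{L}{2\eta}\norm{\bm{\lambda}_0-\bm{\lambda}_\star}^2+\frac{L\eta}{2}\log(S_{T-1}/\delta)$. This closes via $\log u\le cu-1-\log c$ with $u=\sqrt{S_{T-1}/\delta}$ and small $c$. Your route is shorter, but its constants are only implicit.

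The uniform iterate bound you planned as a first step is unnecessary. The regret inequality at $\bm{\lambda}_\star$ already contains everything this argument uses, so the ``delicate'' log-in-the-iterate-bound issue never arises.
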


\subsection{Primal-Dual AcceleGrad}
\label{subsec:pd-accelegrad}

The accelerated method combines AcceleGrad's weight sequence and adaptive accumulator~\citep{levy2018online} with the dual subgradient and Fenchel-type recovery scheme, as shown in~Algorithm~\ref{alg:pd-accelegrad}. 
For the analysis, we introduce a compact convex dual localization set $\mathcal{D}$ and denote its diameter by $D$. %

\begin{algorithm2e}[t]
\caption{Primal-Dual AcceleGrad}
\label{alg:pd-accelegrad}
\KwIn{Compact convex dual set $\mathcal{D}$, initial point $\bm{\lambda}_0\in\mathcal{D}$, scale parameter $\eta>0$, offset $\delta>0$.}
Initialize $\tilde{\bm{\lambda}}_0=\hat{\bm{\lambda}}_0=\bm{\lambda}_0$ and $\alpha_{-1}=0$\;
\For{$t=0,1,\ldots,T-1$}{
Set $\alpha_t=\big(1+\sqrt{1+4\alpha_{t-1}^2}\big)/2$ and $\tau_t=1/\alpha_t$\;
Set $\bm{\lambda}_t=\tau_t\tilde{\bm{\lambda}}_t+(1-\tau_t)\hat{\bm{\lambda}}_t$\;
Compute $\bm{x}^*_t=\bm{x}^*(\bm{\lambda}_t)$ and $\bm{r}^*_t=\bm{r}^*(\bm{\lambda}_t)$\;
Set $\bm{g}_t=-\bm{A}\bm{x}^*_t+\bm{b}+\bm{r}^*_t$\;
Set $S_t=\delta+\sum_{s=0}^{t}\alpha_s^2\norm{\bm{g}_s}^2$ and $\eta_t=\eta/\sqrt{S_t}$\;
Set $\tilde{\bm{\lambda}}_{t+1}=\proj_{\mathcal{D}}(\tilde{\bm{\lambda}}_t-\alpha_t\eta_t\bm{g}_t)$\;
Set $\hat{\bm{\lambda}}_{t+1}=\bm{\lambda}_t-\eta_t\bm{g}_t$\;
}
\KwOut{$\hat{\bm{\lambda}}_T$ and $\bar{\bm{x}}_T=A_T^{-1}\sum_{t=0}^{T-1}\alpha_t\bm{x}^*_t$, where $A_T=\sum_{t=0}^{T-1}\alpha_t$.}
\end{algorithm2e}

\begin{theorem}[Primal-Dual AcceleGrad]
\label{thm:pd-accelegrad}
Assume $\bm{0}\in\mathcal{D}$, and assume there exists $\rho>0$ such that $\bm{\lambda}_\star+\rho\mathbb{B}\subseteq\mathcal{D}$, where $\mathbb{B}:=\{\bm{u}:\norm{\bm{u}}\leq1\}$.
Let the iterates be generated by Algorithm~\ref{alg:pd-accelegrad} for $T\ge1$. %
If $g$ is $G$-Lipschitz, then
\begin{equation}
    \label{eq:pd-accelegrad-nonsmooth-rate}
    \begin{aligned}
        g(\hat{\bm{\lambda}}_T)-g_\star=\widetilde{\mathcal{O}}\!\left(\frac{1}{\sqrt{T}}\right),\quad\! f(\bar{\bm{x}}_T)-f_\star=\widetilde{\mathcal{O}}\!\left(\frac{1}{\sqrt{T}}\right),\quad\! \dist(\bm{A}\bar{\bm{x}}_T-\bm{b},\mathcal{K})=\widetilde{\mathcal{O}}\!\left(\frac{1}{\sqrt{T}}\right).
    \end{aligned}
\end{equation}
If instead $g$ is differentiable and $L$-smooth, and $\eta>\sqrt{2}D$, then
\begin{equation}
    \label{eq:pd-accelegrad-smooth-rate}
    \begin{aligned}
        g(\hat{\bm{\lambda}}_T)-g_\star=\mathcal{O}\!\left(\frac{1}{T^2}\right),\quad f(\bar{\bm{x}}_T)-f_\star=\mathcal{O}\!\left(\frac{1}{T^2}\right),\quad \dist(\bm{A}\bar{\bm{x}}_T-\bm{b},\mathcal{K})=\mathcal{O}\!\left(\frac{1}{T^2}\right).
    \end{aligned}
\end{equation}
\end{theorem}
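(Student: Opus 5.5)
The proof has two layers: a dual-side AcceleGrad bound, and a transfer of that bound to the primal objective residual and feasibility gap through the Fenchel-type oracle outputs.

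\textbf{Dual layer.} First, I will reproduce the AcceleGrad analysis of \citep{levy2018online} for the dual iterates. The basic per-step inequality, for any $\bm{\lambda}\in\mathcal{D}$, is
\begin{equation*}
\sum_{t=0}^{T-1}\alpha_t\ip{\bm{g}_t}{\tilde{\bm{\lambda}}_t-\bm{\lambda}}\le \frac{D^2}{2\eta}\sqrt{S_{T-1}}+\sum_{t}\frac{\alpha_t^2\eta_t}{2}\norm{\bm{g}_t}^2 .
\end{equation*}
It comes from the projected step on $\mathcal{D}$, using the standard telescoping argument with nonincreasing $\eta_t$. I will combine it with the coupling $\bm{\lambda}_t=\tau_t\tilde{\bm{\lambda}}_t+(1-\tau_t)\hat{\bm{\lambda}}_t$ and the identity $\alpha_t^2-\alpha_t=\alpha_{t-1}^2$. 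The result is the weighted gap
\begin{equation*}
\alpha_{T-1}^2\bigl(g(\hat{\bm{\lambda}}_T)-g(\bm{\lambda})\bigr)\lesssim \sum_t\alpha_t\ip{\bm{g}_t}{\bm{\lambda}_t-\bm{\lambda}}-\sum_t\alpha_t\bigl(g(\bm{\lambda}_t)-g(\bm{\lambda})\bigr)+\text{(error)} .
\end{equation*}
In the Lipschitz case the error is controlled by $\sum_t\alpha_t^2\norm{\bm{g}_t}^2/\sqrt{S_t}\le 2\sqrt{S_{T-1}}$ together with $g(\hat{\bm{\lambda}}_{t+1})\le g(\bm{\lambda}_t)+G\eta_t\norm{\bm{g}_t}$. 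In the smooth case the descent lemma gives a $-(\eta_t-L\eta_t^2/2)\norm{\bm{g}_t}^2$ term. The condition $\eta>\sqrt2D$ enters exactly here: it makes $\eta_t\ge 2/L$ happen only finitely often, so $S_t$ stays bounded, exactly as in \citep{levy2018online}. Since $\alpha_t=\Theta(t)$, this gives the dual rates $\widetilde{\mathcal O}(T^{-1/2})$ and $\mathcal O(T^{-2})$.

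\textbf{Primal transfer.} Instead of taking only $\bm{\lambda}=\bm{\lambda}_\star$, I keep the linear form $\sum_t\alpha_t[\ip{\bm{g}_t}{\bm{\lambda}_t-\bm{\lambda}}]$ for arbitrary $\bm{\lambda}\in\mathcal{D}$. By definition of the oracles,
\begin{equation*}
g(\bm{\lambda}_t)-\ip{\bm{g}_t}{\bm{\lambda}_t}=-f(\bm{x}^*_t).
\end{equation*}
Also, by convexity of $f$ and linearity, $\sum_t\alpha_t\ip{\bm{g}_t}{\bm{\lambda}}/A_T=\ip{-\bm{A}\bar{\bm{x}}_T+\bm{b}+\bar{\bm{r}}_T}{\bm{\lambda}}$, where $\bar{\bm{r}}_T\in\mathcal{K}$. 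Together these give
\begin{equation*}
f(\bar{\bm{x}}_T)+\ip{\bm{\lambda}}{\bm{A}\bar{\bm{x}}_T-\bm{b}-\bar{\bm{r}}_T}\le -\tfrac{1}{A_T}\sum_t\alpha_t g(\bm{\lambda}_t)+\frac{\mathcal{E}_T(\bm{\lambda})}{A_T}.
\end{equation*}
Here $\mathcal{E}_T$ is the regret bound from the dual layer, uniform over $\bm{\lambda}\in\mathcal{D}$, and it is of order $A_T\cdot$rate. I then bound $-\sum_t\alpha_t g(\bm{\lambda}_t)\le -A_Tg_\star=A_Tf_\star$.

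This yields two consequences. Setting $\bm{\lambda}=\bm{0}\in\mathcal{D}$ gives the upper bound on $f(\bar{\bm{x}}_T)-f_\star$. Setting $\bm{\lambda}=\bm{\lambda}_\star+\rho\,\bm{u}$, with $\bm{u}$ the unit vector along $\bm{A}\bar{\bm{x}}_T-\bm{b}-\bar{\bm{r}}_T$, gives
\begin{equation*}
f(\bar{\bm{x}}_T)-f_\star+\ip{\bm{\lambda}_\star}{\cdot}+\rho\norm{\bm{A}\bar{\bm{x}}_T-\bm{b}-\bar{\bm{r}}_T}\le\mathcal{E}_T/A_T .
\end{equation*}
Saddle-point optimality gives $f(\bar{\bm{x}}_T)-f_\star+\ip{\bm{\lambda}_\star}{\bm{A}\bar{\bm{x}}_T-\bm{b}-\bar{\bm{r}}_T}\ge0$. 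Hence the feasibility gap satisfies $\dist(\bm{A}\bar{\bm{x}}_T-\bm{b},\mathcal{K})\le\mathcal{E}_T/(\rho A_T)$. The lower bound on the primal residual, $f(\bar{\bm{x}}_T)-f_\star\ge-\norm{\bm{\lambda}_\star}\dist(\cdot)$, follows from the same saddle-point inequality.

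\textbf{Main obstacle.} The hard step is making the accelerated regret decomposition hold with $\bm{\lambda}$ \emph{arbitrary} in $\mathcal{D}$ while still comparing to $\sum\alpha_t g(\bm{\lambda}_t)$ and not only to $\alpha_{T-1}^2 g(\hat{\bm{\lambda}}_T)$. The primal averaging uses weights $\alpha_t$ at the points $\bm{\lambda}_t$, whereas the accelerated potential naturally telescopes $\alpha_{t-1}^2(g(\hat{\bm{\lambda}}_t)-g(\bm{\lambda}))$.

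I would first try to define the primal certificate via the $\tilde{\bm{\lambda}}$-sequence regret only. That inequality is linear in $\bm{\lambda}$ and does not need $g(\hat{\bm{\lambda}})$. Then I would use $g(\hat{\bm{\lambda}}_{t+1})\le g(\bm{\lambda}_t)-c\,\eta_t\norm{\bm{g}_t}^2$ in the smooth case to absorb the negative terms and show $\mathcal{E}_T=\mathcal{O}(1)$, so that $\mathcal{E}_T/A_T=\mathcal O(T^{-2})$ because $A_T=\Theta(T^2)$. If the cross terms do not cancel exactly, I would fall back on the estimate-sequence bookkeeping of \citep{levy2018online} with $\bm{\lambda}$ kept free throughout.
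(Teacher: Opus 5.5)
There is a genuine gap in the primal transfer. You split
\[
A_T\bigl(f(\bar{\bm{x}}_T)-\ip{\bar{\bm{g}}_T}{\bm{\lambda}}\bigr)\le\sum_{t=0}^{T-1}\alpha_t\ip{\bm{g}_t}{\bm{\lambda}_t-\bm{\lambda}}-\sum_{t=0}^{T-1}\alpha_t g(\bm{\lambda}_t)
\]
into two pieces and bound them separately. The first is $-\sum_t\alpha_t g(\bm{\lambda}_t)\le A_Tf_\star$. The second is an asserted bound $\mathcal{E}_T$ on the query-point regret $\sum_t\alpha_t\ip{\bm{g}_t}{\bm{\lambda}_t-\bm{\lambda}}$, uniform over $\mathcal{D}$ and of size $\mathcal{O}(1)$ in the smooth case.

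Your dual layer does not deliver such a bound. What the AcceleGrad bookkeeping gives, with $\bm{\lambda}$ kept free, is
\[
\alpha_{T-1}^2g(\hat{\bm{\lambda}}_T)\le\mathcal{E}+\sum_{t=0}^{T-1}\alpha_t\bigl(g(\bm{\lambda}_t)-\ip{\bm{g}_t}{\bm{\lambda}_t-\bm{\lambda}}\bigr),\qquad\mathcal{E}:=\frac{D^2}{2\eta_{T-1}}+\sum_{t=0}^{T-1}\alpha_t^2\Delta_t,
\]
where $\Delta_t=g(\hat{\bm{\lambda}}_{t+1})-g(\bm{\lambda}_t)+\frac{\eta_t}{2}\norm{\bm{g}_t}^2$. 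Using $\alpha_{T-1}^2=A_T$, this says the query-point regret is at most $\mathcal{E}+\sum_t\alpha_t\bigl(g(\bm{\lambda}_t)-g(\hat{\bm{\lambda}}_T)\bigr)$.

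Since $\bm{\lambda}_\star\in\mathcal{D}$, any uniform $\mathcal{E}_T$ must dominate the regret at $\bm{\lambda}_\star$. By convexity that regret is at least $\sum_t\alpha_t\bigl(g(\bm{\lambda}_t)-g_\star\bigr)$, with $\alpha_t\asymp t$. So your route needs this weighted sum of query-point gaps to be $\mathcal{O}(1)$, a summability property of $\{\bm{\lambda}_t\}$ that nothing in the proposal supplies. Even granting $g(\bm{\lambda}_t)-g_\star=\mathcal{O}((t+1)^{-2})$, the best it yields is $\mathcal{O}(\log T)$, i.e.\ a rate of $\mathcal{O}(\log T/T^2)$ instead of $\mathcal{O}(T^{-2})$.

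The ``$\tilde{\bm{\lambda}}$-regret only'' plan does not help. The missing piece $\sum_t\alpha_t\ip{\bm{g}_t}{\bm{\lambda}_t-\tilde{\bm{\lambda}}_t}=\sum_t\alpha_{t-1}^2\ip{\bm{g}_t}{\hat{\bm{\lambda}}_t-\bm{\lambda}_t}$ is controlled through $g(\hat{\bm{\lambda}}_t)-g(\bm{\lambda}_t)$, which brings the $g(\hat{\bm{\lambda}}_t)$ telescoping right back. Relatedly, your displayed weighted-gap inequality has the linearization remainder with the wrong sign. The correct remainder, $\sum_t\alpha_t\bigl(g(\bm{\lambda}_t)-g(\bm{\lambda})-\ip{\bm{g}_t}{\bm{\lambda}_t-\bm{\lambda}}\bigr)$, is nonpositive, and that is what makes the dual rate immediate at $\bm{\lambda}=\bm{\lambda}_\star$.

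The missing idea is simply not to separate $\sum_t\alpha_t g(\bm{\lambda}_t)$. Substitute $g(\bm{\lambda}_t)-\ip{\bm{g}_t}{\bm{\lambda}_t}=-f(\bm{x}^*_t)$ directly into the inequality above. The $g(\bm{\lambda}_t)$ terms cancel identically, leaving $\sum_t\alpha_t f(\bm{x}^*_t)\le\mathcal{E}-\alpha_{T-1}^2g(\hat{\bm{\lambda}}_T)+\sum_t\alpha_t\ip{\bm{g}_t}{\bm{\lambda}}$. Then $g(\hat{\bm{\lambda}}_T)\ge g_\star=-f_\star$ and $\alpha_{T-1}^2=\sum_t\alpha_t=A_T$ (from $\alpha_t^2-\alpha_t=\alpha_{t-1}^2$) give $f(\bar{\bm{x}}_T)-f_\star-\ip{\bar{\bm{g}}_T}{\bm{\lambda}}\le\mathcal{E}/A_T$ for all $\bm{\lambda}\in\mathcal{D}$.

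So the comparison with $\alpha_{T-1}^2g(\hat{\bm{\lambda}}_T)$, which your ``main obstacle'' paragraph regards as insufficient, is exactly what is needed. Only the final iterate's dual gap must be nonnegative, and no control of $g(\bm{\lambda}_t)-g_\star$ is required. From there your choices $\bm{\lambda}=\bm{0}$ and $\bm{\lambda}=\bm{\lambda}_\star+\rho\bm{u}$, with the saddle-point inequality, go through verbatim. All three rates then reduce to bounding $\mathcal{E}$:
\begin{itemize}
\item in the Lipschitz case, $\widetilde{\mathcal{O}}(T^{3/2})$, via Cauchy--Schwarz against $H_T=\sum_t\alpha_t^2$ plus the log-sum lemma;
\item in the smooth case, $\mathcal{O}(1)$.
\end{itemize}

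In the smooth case, note that $\eta>\sqrt{2}D$ does not govern when $\eta_t$ falls below a multiple of $1/L$. The large-step indices always form an initial segment, because $S_t$ is nondecreasing. Its actual role is to give $\frac{D^2}{2\eta}\le\frac{\eta}{4}$, so that the negative tail $-\frac{\eta}{4}\bigl(\sqrt{S_{T-1}}-\sqrt{S_{t_L}}\bigr)$ absorbs the growing term $\frac{D^2}{2\eta}\sqrt{S_{T-1}}$.
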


The conditions involving $\mathcal{D}$ are inherited from analyses of accelerated adaptive methods in the unconstrained setting~\citep{levy2018online}. 
$\mathcal{D}$ is a theoretical localization device and may, for example, be chosen as a sufficiently large Euclidean ball. 
In practice, we omit the projection onto $\mathcal{D}$ in~Algorithm~\ref{alg:pd-accelegrad}. 

\section{Experiments}
\label{sec:experiments}

We compare our methods with HCGM \citep{yurtsever2018conditional} and CGAL \citep{yurtsever2019conditional}, using its decreasing-step (dcr) and fixed-step (cst) variants. 
These methods similarly rely on inexpensive Fenchel-type oracle evaluations. 
UniPD and AccUniPD \citep{yurtsever2015universal} are omitted because they require a target accuracy $\varepsilon$ in advance. 
For each baseline, we tune its single parameter over powers of $10$. 
Since each method performs one Fenchel-type oracle evaluation per iteration, their per-iteration computational costs are comparable. 
We therefore report performance against the number of iterations.
The experimental code and data are publicly available at \url{https://github.com/KarthikPrakhya/adapd-experiments}.

\begin{figure}[H]
  \centering
  \includegraphics[width=\textwidth]{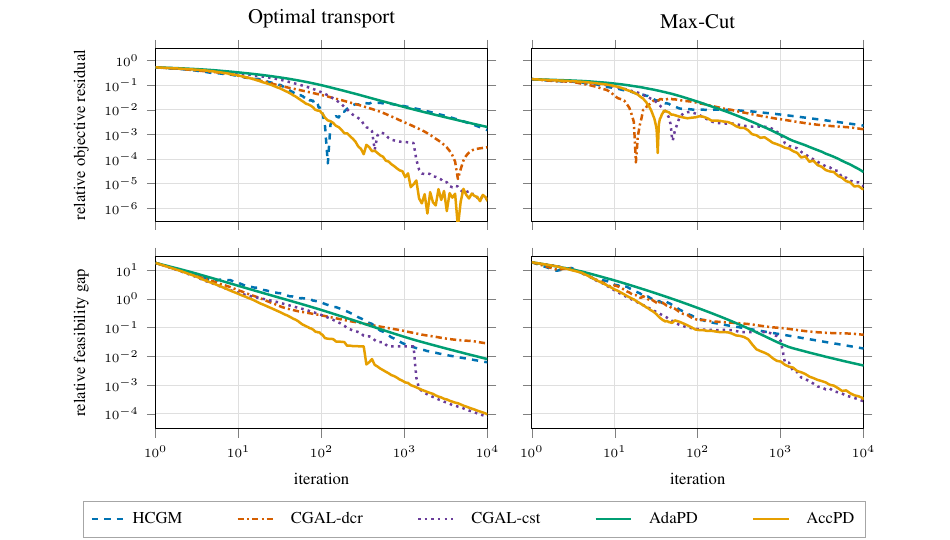}
  \caption{\small Relative primal objective residuals (top) and feasibility gaps (bottom) for OT (left) and G1 Max-Cut (right). Objective residuals are normalized by $\max\{1,|f^\star|\}$ and feasibility gaps by the corresponding problem scale.}
  \label{fig:ot-maxcut-convergence}
\end{figure}

\paragraph{Optimal Transport (OT).}
We consider the discrete Kantorovich optimal transport problem \citep{gabriel2019computational}:
\begin{equation*}
\min_{\bm{X} \in \mathbb{R}^{n \times m}_+} ~  \langle \bm{C}, \bm{X}\rangle 
\quad\mathrm{s.t.}\quad 
\bm{X}\bm{1}_m= \bm{s} ,\quad \bm{X}^\top\bm{1}_n = \bm{d},
\vspace{-0.25em}
\end{equation*}
where $\bm{s}\in\mathbb{R}^n_+$ and $\bm{d}\in\mathbb{R}^m_+$ are the source and target distributions, $\bm{C}\in\mathbb{R}^{n\times m}$ is the transport cost matrix, and $\mathbf{1}_n$ denotes the $n$-dimensional vector of ones.
This is an instance of \eqref{eqn:problem-setup} with 
    $f(\bm{X}) \tighteq \langle\bm{C},\bm{X}\rangle,~    
    \mathcal{A}(\bm{X}) \tighteq \bm{X}\bm{1}_m,~
    \bm{b} \tighteq \bm{s},~
    \mathcal{K} \tighteq \{\bm{0}_n\},~\text{and}~
    \mathcal{X} \tighteq \{\bm{X}\in\mathbb{R}_+^{n\times m}: \bm{X}^\top\bm{1}_n \tighteq \bm{d}\}.$
Here, $\mathcal{X}$ is a product of scaled simplices and the objective is linear, so the Fenchel oracle decomposes columnwise into independent simplex linear minimization oracles that can be evaluated in parallel.

\paragraph{MaxCut SDP.}

We consider the classical SDP relaxation of the weighted Max-Cut problem \citep{goemans1995improved}:
\begin{equation*}
    \max_{\bm{X}\in\mathbb{S}^n_+} ~ \smash{\frac{1}{4}}\langle \bm{L},\bm{X}\rangle    \quad\mathrm{s.t.}\quad
    \mathrm{diag}(\bm{X})=\bm{1}_n,
    \vspace{-0.25em}
\end{equation*}
where $\bm{L}\in\mathbb{S}^n$ is the Laplacian matrix of the weighted graph.
This fits \eqref{eqn:problem-setup} with 
    $f(\bm{X}) \tighteq -\frac{1}{4} \langle\bm{L},\bm{X}\rangle$, 
    $\mathcal{A}(\bm{X}) \tighteq \mathrm{diag}(\bm{X})$,~
    $\bm{b} \tighteq \bm{1}_n$,~
    $\mathcal{K} \tighteq \{\bm{0}_n\},~\text{and}$
    $\mathcal{X} \tighteq \{\bm{X}\in\mathbb{S}^n_+: \mathrm{tr}(\bm{X}) \tighteq n\}.$
The resulting Fenchel oracle is a linear minimization oracle over the scaled spectrahedron, which reduces to an extremal eigenvector computation and can be efficiently approximated using Krylov subspace methods. 

\vspace{0.5em}

For OT, we use the $2000 \!\times\! 2000$ CIFAR-10/MNIST cost matrix from the OTDD pipeline~\cite{alvarez2020geometric}, with median-normalized costs and uniform marginals.
For Max-Cut, we use the $800$-vertex GSet G1 graph~\cite{GSet}. 
Ground truth values are computed with HiGHS for OT and SDPA for Max-Cut~\citep{huangfu2018parallelizing,yamashita2003sdpa,yamashita2010sdpa}. 
The results are shown in Figure~\ref{fig:ot-maxcut-convergence}. 
The observed rates are consistent with our theoretical guarantees. 
Both AdaPD and AccPD improve over the classical nonsmooth rates, while AccPD achieves acceleration. 
Overall, AccPD converges fastest together with the constant-step-size CGAL.

\section{Conclusion}
\label{sec:conclusion}

We introduced adaptive primal-dual subgradient methods that achieve optimal convergence guarantees for both dual and the recovered primal solutions, automatically adapting between nonsmooth and smooth regimes of the dual space. 
Our numerical experiments demonstrate the potential of the proposed methods. 
This is ongoing work, and promising directions include diagonal and matrix-valued adaptive step-sizes, which may better exploit the geometry of ill-conditioned dual problems.

\subsubsection*{Acknowledgments}

This work was supported by the Wallenberg AI, Autonomous Systems and Software Program (WASP) funded by the Knut and Alice Wallenberg Foundation. 
Computational resources were provided by the National Academic Infrastructure for Supercomputing in Sweden (NAISS), funded by the Swedish Research Council. The computational experiments were conducted using resources provided by the High Performance Computing Center North (HPC2N).

\bibliography{./bibliography}

\clearpage

\appendix

\section{Supplementary Analysis}
\label{app:supplementary-analysis}

\subsection{Proofs of Preliminary Results}

\begin{lemma} \label{lem:convex-smooth-ineq-app} Suppose $\phi$ is a differentiable, proper, closed and convex function that is $\gamma$-smooth with respect to a norm $\|\cdot\|$ and that $\bm{\lambda}_\star$ is the minimizer of $\phi$. Then for all $\bm{\lambda} \in \mathbb{R}^n$, the following statement holds:
\begin{equation*}
    \phi(\bm{\lambda}) - \phi(\bm{\lambda}_\star) \geq \frac{1}{2\gamma}\|\nabla \phi(\bm{\lambda})\|^2.
\end{equation*}
\end{lemma}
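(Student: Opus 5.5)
The plan is the standard one-step descent argument: use the quadratic upper bound implied by $\gamma$-smoothness at a well-chosen point, and compare it with the global minimality of $\bm{\lambda}_\star$. Since the norm $\|\cdot\|$ is general, smoothness is read as Lipschitz continuity of $\nabla\phi$ from $\|\cdot\|$ to the dual norm $\|\cdot\|_*$. The gradient in the conclusion is then measured in the dual norm; in the Euclidean case used in the paper the two coincide.

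\emph{Step 1 (descent lemma).} From $\|\nabla\phi(\bm{u})-\nabla\phi(\bm{v})\|_*\le\gamma\|\bm{u}-\bm{v}\|$, integrate along the segment to obtain, for all $\bm{\lambda},\bm{d}$,
\begin{equation*}
\phi(\bm{\lambda}+\bm{d})\le \phi(\bm{\lambda})+\ip{\nabla\phi(\bm{\lambda})}{\bm{d}}+\frac{\gamma}{2}\|\bm{d}\|^2 .
\end{equation*}
This uses only differentiability and the Lipschitz gradient, not convexity. Smoothness on all of $\R^n$ forces $\phi$ to be finite everywhere, so the left-hand side is well defined for every $\bm{d}$.

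\emph{Step 2 (minimize the right-hand side).} Fix $\bm{\lambda}$ and choose a unit vector $\bm{v}$ with $\|\bm{v}\|=1$ and $\ip{\nabla\phi(\bm{\lambda})}{\bm{v}}=-\|\nabla\phi(\bm{\lambda})\|_*$. Such a $\bm{v}$ exists in finite dimensions by compactness of the unit sphere and the definition of the dual norm. Take $\bm{d}=t\bm{v}$ with $t=\|\nabla\phi(\bm{\lambda})\|_*/\gamma$. The bound from Step 1 becomes
\begin{equation*}
\phi(\bm{\lambda}+\bm{d})\le \phi(\bm{\lambda})-\frac{1}{2\gamma}\|\nabla\phi(\bm{\lambda})\|_*^2 .
\end{equation*}
In the Euclidean case this is simply the gradient step $\bm{d}=-\gamma^{-1}\nabla\phi(\bm{\lambda})$.

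\emph{Step 3 (use minimality).} Since $\bm{\lambda}_\star$ minimizes $\phi$ over all of $\R^n$, we have $\phi(\bm{\lambda}_\star)\le\phi(\bm{\lambda}+\bm{d})$. Combining this with Step 2 and rearranging gives the claim.

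The only delicate points are interpretive rather than technical. First, one must read $\|\nabla\phi\|$ as the dual norm when $\|\cdot\|$ is non-Euclidean. Second, one must confirm that the minimizer is global and unconstrained; this is what allows comparing $\phi(\bm{\lambda}_\star)$ with the value at an arbitrary trial point. Both hold in the paper's setting, where the dual problem \eqref{dual-problem-setup} is unconstrained. Step 1 is routine via the fundamental theorem of calculus, and convexity is not even needed.
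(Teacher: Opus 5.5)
Your proposal is correct and follows the same argument as the paper: take a step from $\bm{\lambda}$ that minimizes the quadratic upper bound given by $\gamma$-smoothness, then compare with the global minimality of $\bm{\lambda}_\star$. Your dual-norm handling (a unit vector $\bm{v}$ with $\langle\nabla\phi(\bm{\lambda}),\bm{v}\rangle=-\|\nabla\phi(\bm{\lambda})\|_*$) is a small refinement: the paper's step $\bm{\lambda}-\gamma^{-1}\nabla\phi(\bm{\lambda})$ minimizes that bound only in the Euclidean case, which is the only case the paper uses.
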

\begin{proof}
From the smoothness condition, we have
\begin{equation*}
    \phi(\bm{\lambda}_2) \leq \phi(\bm{\lambda}_1) + \langle \nabla \phi(\bm{\lambda}_1), \bm{\lambda}_2 - \bm{\lambda}_1 \rangle + \frac{\gamma}{2}\|\bm{\lambda}_2 - \bm{\lambda}_1\|^2
\end{equation*}
for any $\bm{\lambda}_1, \bm{\lambda}_2 \in \mathbb{R}^n$.
Finding the value of $\bm{\lambda}_2$ that minimizes right-hand side, we get 
\begin{equation*}
    \bm{\lambda}_2 = \bm{\lambda}_1 - \frac{1}{\gamma}\nabla \phi(\bm{\lambda}_1).
\end{equation*}
Substituting the above with $\bm{\lambda}_1 = \bm{\lambda}$ and we get
\begin{equation*}
    \phi(\bm{\lambda}_\star) \leq \phi\left (\bm{\lambda} - \frac{1}{\gamma}\nabla \phi(\bm{\lambda}) \right ) \leq \phi(\bm{\lambda}) - \frac{1}{2\gamma}\|\nabla \phi(\bm{\lambda})\|^2.
\end{equation*}
Rearranging the above proves the desired result.
\end{proof}

\begin{lemma}[AdaGrad sums]
\label{lem:supp-adagrad-sums}
Let $a_t\geq0$, $S_t=\delta+\sum_{\tau=0}^{t}a_\tau$, and $\delta>0$.
Then, for every $T\geq1$,
\begin{equation}
    \label{eq:supp-log-sum}
    \sum_{t=0}^{T-1}\frac{a_t}{S_t}\leq \log\left(\frac{S_{T-1}}{\delta}\right),
\end{equation}
and
\begin{equation}
    \label{eq:supp-sum}
    \sqrt{S_{T-1}}-\sqrt{\delta} 
    \leq \sum_{t=0}^{T-1}\frac{a_t}{\sqrt{S_t}}
    \leq 2\sqrt{S_{T-1}}.
\end{equation}
\end{lemma}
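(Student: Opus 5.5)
Both bounds follow from comparing each summand with an integral or a telescoping difference, using $S_t-S_{t-1}=a_t$ with the convention $S_{-1}:=\delta$. No earlier result from the paper is needed; the only inputs are elementary inequalities for $\log$ and $\sqrt{\cdot}$.

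\emph{Logarithmic bound.} For each $t$ we have $0\le a_t\le S_t$ and $S_{t-1}=S_t-a_t\geq\delta>0$. Using $1-x\le -\log x$ with $x=S_{t-1}/S_t\in(0,1]$, we get
\begin{equation*}
\frac{a_t}{S_t}=1-\frac{S_{t-1}}{S_t}\leq \log\frac{S_t}{S_{t-1}}.
\end{equation*}
Summing over $t=0,\dots,T-1$, the right-hand side telescopes to $\log(S_{T-1}/S_{-1})=\log(S_{T-1}/\delta)$, which is \eqref{eq:supp-log-sum}.

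\emph{Square-root bounds.} Factor each increment as
\begin{equation*}
a_t=S_t-S_{t-1}=\bigl(\sqrt{S_t}-\sqrt{S_{t-1}}\bigr)\bigl(\sqrt{S_t}+\sqrt{S_{t-1}}\bigr).
\end{equation*}
Since $0<\sqrt{S_{t-1}}\le\sqrt{S_t}$, the second factor lies between $\sqrt{S_t}$ and $2\sqrt{S_t}$. Dividing by $\sqrt{S_t}$ therefore gives
\begin{equation*}
\sqrt{S_t}-\sqrt{S_{t-1}}\leq \frac{a_t}{\sqrt{S_t}}\leq 2\bigl(\sqrt{S_t}-\sqrt{S_{t-1}}\bigr).
\end{equation*}
Summing over $t$ telescopes both sides to multiples of $\sqrt{S_{T-1}}-\sqrt{\delta}$. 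This yields the lower bound in \eqref{eq:supp-sum} directly, and the upper bound $2(\sqrt{S_{T-1}}-\sqrt\delta)\le 2\sqrt{S_{T-1}}$.

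The argument has no real obstacle. The one point requiring care is the $t=0$ term: setting $S_{-1}=\delta$ keeps every denominator positive and makes the telescoping sums start correctly, which is exactly why $\delta>0$ is assumed.
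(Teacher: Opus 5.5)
Your proposal is correct and follows the paper's proof essentially line for line. Both arguments set $S_{-1}=\delta$, apply $\log x\le x-1$ with $x=S_{t-1}/S_t$ and telescope for the logarithmic bound, and use the factorization $a_t=(\sqrt{S_t}-\sqrt{S_{t-1}})(\sqrt{S_t}+\sqrt{S_{t-1}})$ with the ratio $(\sqrt{S_t}+\sqrt{S_{t-1}})/\sqrt{S_t}\in[1,2]$ for the square-root bounds, where your final step $2(\sqrt{S_{T-1}}-\sqrt{\delta})\le 2\sqrt{S_{T-1}}$ just makes explicit the slack the paper leaves implicit.
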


\begin{proof}
Set $S_{-1}:=\delta$.
Since $a_t=S_t-S_{t-1}$ and $S_t\geq S_{t-1}$, we have from the basic inequality $\log x\leq x-1$ with $x:=S_{t-1}/S_t$ that
\begin{equation*}
    \frac{a_t}{S_t}=1-\frac{S_{t-1}}{S_t}\leq \log(S_t)-\log(S_{t-1}).
\end{equation*}
Summing this inequality gives \eqref{eq:supp-log-sum}.
Moreover,
\begin{equation*}
    \frac{a_t}{\sqrt{S_t}}=\frac{S_t-S_{t-1}}{\sqrt{S_t}}
    = \frac{\sqrt{S_t}+\sqrt{S_{t-1}}}{\sqrt{S_t}}(\sqrt{S_t}-\sqrt{S_{t-1}}).
\end{equation*}
Since $1\leq\frac{\sqrt{S_t}+\sqrt{S_{t-1}}}{\sqrt{S_t}}\leq 2$, we have
\begin{equation*}
    \sqrt{S_t}-\sqrt{S_{t-1}}\leq\frac{a_t}{\sqrt{S_t}}\leq2(\sqrt{S_t}-\sqrt{S_{t-1}}).
\end{equation*}
We complete the proof by summing this inequality over $t=0,\dots,T-1$.
\end{proof}

\subsection{Analysis of Primal-Dual AdaGrad}
\label{app:pd-adagrad-analysis}
In this subsection, we provide the analysis for Algorithm~\ref{alg:pd-adagrad} and prove Theorem~\ref{thm:pd-adagrad}. 
To this end, we first introduce the following proposition.
\begin{proposition}[Basic primal-dual recovery]
\label{prop:supp-basic-recovery}
For the iterates generated from Algorithm~\ref{alg:pd-adagrad}, define $A_T=\sum_{t=0}^{T-1}\eta_t$, $\bar{\bm{\lambda}}_T=A_T^{-1}\sum_{t=0}^{T-1}\eta_t\bm{\lambda}_t$, and $\bar{\bm{x}}_T=A_T^{-1}\sum_{t=0}^{T-1}\eta_t\bm{x}^*_t$.
Then
\begin{align}
    g(\bar{\bm{\lambda}}_T)-g_\star &\leq \frac{1}{2A_T}\left(\norm{\bm{\lambda}_0-\bm{\lambda}_\star}^2+\sum_{t=0}^{T-1}\eta_t^2\norm{\bm{g}_t}^2\right), \label{eq:supp-basic-dual} \\
    f(\bar{\bm{x}}_T)-f_\star &\leq \frac{1}{2A_T}\left(\norm{\bm{\lambda}_0}^2+\sum_{t=0}^{T-1}\eta_t^2\norm{\bm{g}_t}^2\right), \label{eq:supp-basic-primal} \\
    \dist(\bm{A}\bar{\bm{x}}_T-\bm{b},\mathcal{K}) &\leq \frac{1}{A_T}\left(2\norm{\bm{\lambda}_0-\bm{\lambda}_\star}+\sqrt{\sum_{t=0}^{T-1}\eta_t^2\norm{\bm{g}_t}^2}\right). \label{eq:supp-basic-feasibility}
\end{align}
\end{proposition}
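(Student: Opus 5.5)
Everything will be derived from the standard one-step expansion of the unconstrained dual update. For any $\bm{\lambda}\in\R^n$, since $\bm{\lambda}_{t+1}=\bm{\lambda}_t-\eta_t\bm{g}_t$,
\begin{equation*}
\eta_t\ip{\bm{g}_t}{\bm{\lambda}_t-\bm{\lambda}} = \tfrac12\norm{\bm{\lambda}_t-\bm{\lambda}}^2-\tfrac12\norm{\bm{\lambda}_{t+1}-\bm{\lambda}}^2+\tfrac12\eta_t^2\norm{\bm{g}_t}^2 .
\end{equation*}
Summing over $t=0,\dots,T-1$ and dropping the final nonnegative term gives
\begin{equation*}
\sum_{t=0}^{T-1}\eta_t\ip{\bm{g}_t}{\bm{\lambda}_t-\bm{\lambda}}\le \tfrac12\norm{\bm{\lambda}_0-\bm{\lambda}}^2-\tfrac12\norm{\bm{\lambda}_T-\bm{\lambda}}^2+\tfrac12\sum_{t=0}^{T-1}\eta_t^2\norm{\bm{g}_t}^2 .
\end{equation*}
The three bounds come from this inequality with different choices of $\bm{\lambda}$. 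For \eqref{eq:supp-basic-dual}, take $\bm{\lambda}=\bm{\lambda}_\star$. By convexity, $\ip{\bm{g}_t}{\bm{\lambda}_t-\bm{\lambda}_\star}\ge g(\bm{\lambda}_t)-g_\star$, because $\bm{g}_t\in\partial g(\bm{\lambda}_t)$ by Danskin. Jensen's inequality with weights $\eta_t/A_T$ then completes this bound.

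For the primal bounds, I use the fact that the oracle outputs make the Lagrangian tight. Define $\ell_t(\bm{\lambda}) := \ip{\bm{\lambda}}{-\bm{A}\bm{x}^*_t+\bm{b}+\bm{r}^*_t}-f(\bm{x}^*_t)$, which is affine in $\bm{\lambda}$ with gradient $\bm{g}_t$. By construction $\ell_t(\bm{\lambda}_t)=g(\bm{\lambda}_t)$ and $\ell_t\le g$ everywhere. Hence
\begin{equation*}
\ip{\bm{g}_t}{\bm{\lambda}_t-\bm{\lambda}}=g(\bm{\lambda}_t)-\ell_t(\bm{\lambda}).
\end{equation*}
Set $\bar{\bm{r}}_T=A_T^{-1}\sum_t\eta_t\bm{r}^*_t\in\mathcal{K}$, using convexity of $\mathcal{K}$. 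Averaging and using convexity of $f$ gives
\begin{equation*}
A_T^{-1}\sum_t\eta_t\ell_t(\bm{\lambda})\le \ip{\bm{\lambda}}{-\bm{A}\bar{\bm{x}}_T+\bm{b}+\bar{\bm{r}}_T}-f(\bar{\bm{x}}_T).
\end{equation*}
Write $\bm{v}_T:=\bm{A}\bar{\bm{x}}_T-\bm{b}-\bar{\bm{r}}_T$. Since $g(\bm{\lambda}_t)\ge g_\star=-f_\star$, the summed inequality yields, for every $\bm{\lambda}$,
\begin{equation*}
f(\bar{\bm{x}}_T)-f_\star+\ip{\bm{\lambda}}{\bm{v}_T}\le \frac{1}{2A_T}\Big(\norm{\bm{\lambda}_0-\bm{\lambda}}^2-\norm{\bm{\lambda}_T-\bm{\lambda}}^2+\sum_t\eta_t^2\norm{\bm{g}_t}^2\Big).
\end{equation*}
Taking $\bm{\lambda}=\bm{0}$ gives \eqref{eq:supp-basic-primal} directly.

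Feasibility is the main obstacle. First note $\dist(\bm{A}\bar{\bm{x}}_T-\bm{b},\mathcal{K})\le\norm{\bm{v}_T}$, since $\bar{\bm{r}}_T\in\mathcal{K}$. To control $\norm{\bm{v}_T}$ I need a lower bound on $f(\bar{\bm{x}}_T)-f_\star$. Saddle-point optimality of $\bm{\lambda}_\star$ gives $f(\bm{x})+\ip{\bm{\lambda}_\star}{\bm{b}+\bm{r}-\bm{A}\bm{x}}\ge f_\star$ for all $\bm{x}\in\mathcal{X}$, $\bm{r}\in\mathcal{K}$. Applying this at $(\bar{\bm{x}}_T,\bar{\bm{r}}_T)$ yields $f(\bar{\bm{x}}_T)-f_\star\ge\ip{\bm{\lambda}_\star}{\bm{v}_T}$.

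Rather than fixing $\bm{\lambda}$ in advance, the cleanest route is to avoid the explicit choice. Observe that $\bm{\lambda}_T=\bm{\lambda}_0-\sum_t\eta_t\bm{g}_t=\bm{\lambda}_0-A_T\bm{v}_T$, because $\sum_t\eta_t\bm{g}_t=-A_T\bm{v}_T$. Therefore $\norm{\bm{v}_T}=\norm{\bm{\lambda}_0-\bm{\lambda}_T}/A_T$, and it suffices to bound $\norm{\bm{\lambda}_T-\bm{\lambda}_\star}$. For this, return to the summed inequality at $\bm{\lambda}_\star$ and keep the $-\tfrac12\norm{\bm{\lambda}_T-\bm{\lambda}_\star}^2$ term. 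Its left side is $\sum_t\eta_t\,(g(\bm{\lambda}_t)-g_\star)\ge 0$, so
\begin{equation*}
\norm{\bm{\lambda}_T-\bm{\lambda}_\star}\le \sqrt{\norm{\bm{\lambda}_0-\bm{\lambda}_\star}^2+\sum_t\eta_t^2\norm{\bm{g}_t}^2}\le \norm{\bm{\lambda}_0-\bm{\lambda}_\star}+\sqrt{\sum_t\eta_t^2\norm{\bm{g}_t}^2}.
\end{equation*}
The triangle inequality $\norm{\bm{\lambda}_0-\bm{\lambda}_T}\le\norm{\bm{\lambda}_0-\bm{\lambda}_\star}+\norm{\bm{\lambda}_\star-\bm{\lambda}_T}$ then gives exactly \eqref{eq:supp-basic-feasibility}, with the factor $2$ on $\norm{\bm{\lambda}_0-\bm{\lambda}_\star}$. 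With this route the saddle-point lower bound is not needed for \eqref{eq:supp-basic-feasibility} itself, though it remains useful for two-sided primal residual bounds.
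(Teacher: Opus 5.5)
Your proof is correct. For \eqref{eq:supp-basic-dual} and \eqref{eq:supp-basic-primal} it is the paper's argument: your affine minorant $\ell_t$ repackages the paper's identity $g(\bm{\lambda}_t)=-f(\bm{x}^*_t)+\ip{\bm{g}_t}{\bm{\lambda}_t}$, and the choices $\bm{\lambda}=\bm{\lambda}_\star$ and $\bm{\lambda}=\bm{0}$ are the same.

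For \eqref{eq:supp-basic-feasibility} you take a different route. The paper keeps $\bm{\lambda}$ free in the primal inequality and lower-bounds $f(\bar{\bm{x}}_T)-f_\star$ by $\ip{\bar{\bm{g}}_T}{\bm{\lambda}_\star}$ using the Lagrangian saddle-point inequality. It then plugs in $\bm{\lambda}=\bm{\lambda}_0-A_T\bar{\bm{g}}_T$ and solves a quadratic inequality in $A_T\norm{\bar{\bm{g}}_T}$. You instead note that the unprojected update telescopes, $A_T\bar{\bm{g}}_T=\bm{\lambda}_0-\bm{\lambda}_T$. So the feasibility gap is at most the net dual displacement divided by $A_T$. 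You then control $\norm{\bm{\lambda}_T-\bm{\lambda}_\star}$ by keeping the $-\tfrac12\norm{\bm{\lambda}_T-\bm{\lambda}_\star}^2$ term (a Fej\'er-type bound). The paper's test point $\bm{\lambda}_0-A_T\bar{\bm{g}}_T$ is in fact exactly $\bm{\lambda}_T$. Both routes reach the same intermediate bound $\norm{\bm{\lambda}_0-\bm{\lambda}_\star}+\sqrt{\norm{\bm{\lambda}_0-\bm{\lambda}_\star}^2+V_T}$, with $V_T=\sum_t\eta_t^2\norm{\bm{g}_t}^2$.

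\textbf{What each approach buys.} Your route is shorter and more transparent. Its feasibility step needs only that $\bm{\lambda}_\star$ minimizes $g$, with no saddle-point inequality and no primal structure. The paper's template does not need the averaging weights to equal the step-sizes of a single unprojected sequence. That is why it carries over to Proposition~\ref{prop:supp-acc-recovery}, where the weights are $\alpha_t$ and the $\tilde{\bm{\lambda}}$-updates are projected onto $\mathcal{D}$. There your telescoping identity is unavailable, and the paper instead tests $\bm{\lambda}=\bm{\lambda}_\star-\rho\bar{\bm{g}}_T/\norm{\bar{\bm{g}}_T}$.

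\textbf{Minor slips, none affecting validity.}
\begin{itemize}
\item The identity should read $\bm{\lambda}_T=\bm{\lambda}_0+A_T\bm{v}_T$. This is harmless because only norms are used.
\item The summed one-step relation is an exact identity, so no term is dropped when you retain $-\tfrac12\norm{\bm{\lambda}_T-\bm{\lambda}}^2$.
\item At $\bm{\lambda}_\star$, the left side is bounded below by $\sum_t\eta_t(g(\bm{\lambda}_t)-g_\star)$, not equal to it. Nonnegativity still follows.
\end{itemize}
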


\begin{proof}
The dual update $\bm{\lambda_{t+1}}=\bm{\lambda}_t-\eta_t\bm{g}_t$ gives, for all $\bm{\lambda}\in\R^n$,
\begin{equation}
    \label{eq:supp-basic-one-step}
    \eta_t\ip{\bm{g}_t}{\bm{\lambda}_t-\bm{\lambda}} = \ip{\bm{\lambda}_t-\bm{\lambda_{t+1}}}{\bm{\lambda}_t-\bm{\lambda}} = \frac{1}{2}\norm{\bm{\lambda}_t-\bm{\lambda}}^2-\frac{1}{2}\norm{\bm{\lambda}_{t+1}-\bm{\lambda}}^2+\frac{\eta_t^2}{2}\norm{\bm{g}_t}^2,
\end{equation}
where we used the identity $2\langle a-b, b-c\rangle = \|a-c\|^2-\|a-b\|^2-\|b-c\|^2$ for the second equality.
Summing \eqref{eq:supp-basic-one-step} over $t=0,\ldots,T-1$ gives
\begin{equation}
    \label{eq:supp-basic-regret}
    \sum_{t=0}^{T-1}\eta_t\ip{\bm{g}_t}{\bm{\lambda}_t-\bm{\lambda}}\leq\frac{1}{2}\norm{\bm{\lambda}_0-\bm{\lambda}}^2+\frac{1}{2}\sum_{t=0}^{T-1}\eta_t^2\norm{\bm{g}_t}^2.
\end{equation}
By convexity and $\bm{g}_t\in\partial g(\bm{\lambda}_t)$,
\begin{equation}\label{eq:g-subgrad}
    g(\bm{\lambda}_t)-g_\star\leq\ip{\bm{g}_t}{\bm{\lambda}_t-\bm{\lambda}_\star}.
\end{equation}
Multiplying \eqref{eq:g-subgrad} by $\eta_t$ and summing over $t=0,\dots,T-1$, then using \eqref{eq:supp-basic-regret} with $\bm{\lambda}=\bm{\lambda}_\star$, and applying Jensen's inequality from convexity of $g$ gives \eqref{eq:supp-basic-dual}.

By the definition of $g$ and $\bm{g}_t$,
\begin{equation}\label{eq:g-f-equality}
    g(\bm{\lambda}_t) 
    = \ip{\bm{\lambda}_t}{-A\bm{x}^*_t}-f(\bm{x}^*_t)+\ip{\bm{\lambda}_t}{b+\bm{r}^*_t}
    = -f(\bm{x}^*_t)+\ip{\bm{g}_t}{\bm{\lambda}_t}.
\end{equation}
Since $g(\bm{\lambda}_t)\geq g_\star=-f_\star$ due to strong duality, we further obtain from \eqref{eq:g-f-equality} that
\begin{equation*}
    f(\bm{x}^*_t)-f_\star\leq\ip{\bm{g}_t}{\bm{\lambda}_t}.
\end{equation*}
Multiplying the above inequality by $\eta_t$ and summing over $t=0,\dots,T-1$, then using \eqref{eq:supp-basic-regret} with $\bm{\lambda}=\bm{0}$, and applying Jensen's inequality from convexity of $f$ gives \eqref{eq:supp-basic-primal}.

For notational convenience, let $\bar{\bm{r}}_T=A_T^{-1}\sum_{t=0}^{T-1}\eta_t\bm{r}^*_t$ and define
\begin{equation} \label{eqn:g_bar_T_expr}
    \bar{\bm{g}}_T:=\frac{1}{A_T}\sum_{t=0}^{T-1}\eta_t\bm{g}_t=-\bm{A}\bar{\bm{x}}_T+\bm{b}+\bar{\bm{r}}_T.
\end{equation}
Since $\bar{\bm{r}}_T\in\mathcal{K}$ from convexity of $\mathcal{K}$, we have $\dist(\bm{A}\bar{\bm{x}}_T-\bm{b},\mathcal{K})\leq\norm{\bar{\bm{g}}_T}$.
The proof of \eqref{eq:supp-basic-primal} before specifying $\bm{\lambda}$ gives, for all $\bm{\lambda}\in\R^n$,
\begin{equation}
    \label{eq:supp-basic-primal-free-lambda}
    f(\bar{\bm{x}}_T)-f_\star\leq\ip{\bar{\bm{g}}_T}{\bm{\lambda}}+\frac{1}{2A_T}\norm{\bm{\lambda}_0-\bm{\lambda}}^2+\frac{1}{2A_T}\sum_{t=0}^{T-1}\eta_t^2\norm{\bm{g}_t}^2.
\end{equation}
From the saddle point inequality, we have
\begin{equation} \label{eqn:saddle-point-inequalty}
    f_\star = \mathcal{L}(\bm{x}_\star, \bm{r}_\star, \bm{\lambda}_\star) \leq \mathcal{L}(\bm{x}, \bm{r}, \bm{\lambda}_\star) = f(\bm{x}) + \langle \bm{\lambda}_\star, \bm{A}\bm{x} - \bm{r} - \bm{b} \rangle,
\end{equation}
for all $\bm{x} \in \mathcal{X}$, $\bm{r} \in \mathcal{K}$. 
Substituting $\bar{\bm{x}}_T$ and $\bar{\bm{r}}_T$ into \eqref{eqn:saddle-point-inequalty} along with \eqref{eqn:g_bar_T_expr} and then rearranging gives 
\begin{equation*}
    \ip{\bar{\bm{g}}_T}{\bm{\lambda}_\star}\leq f(\bar{\bm{x}}_T)-f_\star,
\end{equation*}
which together with \eqref{eq:supp-basic-primal-free-lambda} yields
\begin{equation*}
    \ip{\bar{\bm{g}}_T}{\bm{\lambda}_\star-\bm{\lambda}}-\frac{1}{2A_T}\norm{\bm{\lambda}_0-\bm{\lambda}}^2\leq\frac{1}{2A_T}\sum_{t=0}^{T-1}\eta_t^2\norm{\bm{g}_t}^2.
\end{equation*}
Choosing $\bm{\lambda}=\bm{\lambda}_0-A_T\bar{\bm{g}}_T$ and applying Cauchy-Schwarz inequality gives
\begin{equation*}
    \frac{A_T}{2}\norm{\bar{\bm{g}}_T}^2-\norm{\bar{\bm{g}}_T}\norm{\bm{\lambda}_0-\bm{\lambda}_\star}\leq\frac{1}{2A_T}\sum_{t=0}^{T-1}\eta_t^2\norm{\bm{g}_t}^2.
\end{equation*}
Let $y:=A_T\norm{\bar{\bm{g}}_T}$ and $V_T:=\sum_{t=0}^{T-1}\eta_t^2\norm{\bm{g}_t}^2$.
Then the last inequality is equivalent to
\begin{equation*}
    y^2-2y\norm{\bm{\lambda}_0-\bm{\lambda}_\star}-V_T\leq0,
\end{equation*}
solving which implies $y\leq\norm{\bm{\lambda}_0-\bm{\lambda}_\star}+\sqrt{\norm{\bm{\lambda}_0-\bm{\lambda}_\star}^2+V_T}$.
Since $\sqrt{\norm{\bm{\lambda}_0-\bm{\lambda}_\star}^2+V_T}\leq\norm{\bm{\lambda}_0-\bm{\lambda}_\star}+\sqrt{V_T}$, we have $y\leq2\norm{\bm{\lambda}_0-\bm{\lambda}_\star}+\sqrt{V_T}$.
Since $\dist(\bm{A}\bar{\bm{x}}_T-\bm{b},\mathcal{K})\leq\norm{\bar{\bm{g}}_T}=y/A_T$, this proves \eqref{eq:supp-basic-feasibility}. The proof is now complete.
\end{proof}

\begin{proof}\textbf{of Theorem~\ref{thm:pd-adagrad}.}
Suppose first that $g$ is $G$-Lipschitz. Then $\norm{\bm{g}_t}\leq G$, and hence
\begin{equation*}
    S_t=\delta+\sum_{\tau=0}^{t}\norm{\bm{g}_\tau}^2\leq\delta+G^2T,\qquad t=0,\ldots,T-1.
\end{equation*}
It follows that
\begin{equation*}
    A_T=\sum_{t=0}^{T-1}\frac{\eta}{\sqrt{S_t}}\geq\frac{\eta T}{\sqrt{\delta+G^2T}}.
\end{equation*}
Using Lemma~\ref{lem:supp-adagrad-sums} with $a_t=\norm{\bm{g}_t}^2$ gives
\begin{equation*}
    \sum_{t=0}^{T-1}\eta_t^2\norm{\bm{g}_t}^2
    = \eta^2\sum_{t=0}^{T-1}\frac{\norm{\bm{g}_t}^2}{S_t}
    \leq \eta^2\log\left(\frac{S_{T-1}}{\delta}\right)
    \leq \eta^2\log\left(1+\frac{G^2T}{\delta}\right).
\end{equation*}
Substituting the last two estimates into Proposition~\ref{prop:supp-basic-recovery} yields
\begin{align}
    g(\bar{\bm{\lambda}}_T)-g_\star
    & \leq \frac{\sqrt{\delta+G^2T}}{2\eta T} \left( \norm{\bm{\lambda}_0-\bm{\lambda}_\star}^2 + \eta^2\log\Big(1+\frac{G^2T}{\delta}\Big) \right), \label{eq:supp-pd-adagrad-bd-dual} \\
    f(\bar{\bm{x}}_T)-f_\star
    & \leq \frac{\sqrt{\delta+G^2T}}{2\eta T} \left( \norm{\bm{\lambda}_0}^2 + \eta^2\log\Big(1+\frac{G^2T}{\delta}\Big) \right), \label{eq:supp-pd-adagrad-bd-primal} \\
    \dist(\bm{A}\bar{\bm{x}}_T-\bm{b},\mathcal{K})
    & \leq \frac{\sqrt{\delta+G^2T}}{\eta T} \left( 2\norm{\bm{\lambda}_0-\bm{\lambda}_\star} + \eta \sqrt{\log\Big(1+\frac{G^2T}{\delta}\Big)} \right). \label{eq:supp-pd-adagrad-bd-feasibility}
\end{align}
Omitting constant factors $\eta$, $\delta$, and $G$, $\norm{\bm{\lambda}_0-\bm{\lambda}_\star}$, and $\norm{\bm{\lambda}_0}$, we obtain that the three residuals in \eqref{eq:supp-pd-adagrad-bd-dual}--\eqref{eq:supp-pd-adagrad-bd-feasibility} is of order $\widetilde{\mathcal{O}}(T^{-1/2})$.

Suppose now that $g$ is differentiable and $L$-smooth, and define
\begin{equation*}
    \bar{S} := \max\{\delta,4\eta^2L^2\},
    \qquad
    B := \norm{\bm{\lambda}_0-\bm{\lambda}_\star}^2 + \eta^2\log(\bar{S}/\delta).
\end{equation*}
Since $g$ is also convex, Lemma~\ref{lem:convex-smooth-ineq-app} implies
\begin{equation}\label{eq:adagrad-cvx-1}
    \norm{\bm{g}_t}^2\leq2L(g(\bm{\lambda}_t)-g_\star)\leq2L\ip{\bm{g}_t}{\bm{\lambda}_t-\bm{\lambda}_\star}.
\end{equation}
Set $S_{-1}:=\delta$, and let $t_L$ be the largest index in $\{0,\ldots,T-1\}$ such that $\eta_{t_L}>1/(2L)$, with $t_L:=-1$ if this set is empty.
For $t>t_L$, it follows from \eqref{eq:adagrad-cvx-1} that
\begin{equation*}
    \eta_t^2\norm{\bm{g}_t}^2\leq\frac{\eta_t}{2L}\norm{\bm{g}_t}^2\leq\eta_t\ip{\bm{g}_t}{\bm{\lambda}_t-\bm{\lambda}_\star}.
\end{equation*}
For the prefix $t\leq t_L$, with an empty sum interpreted as zero, Lemma~\ref{lem:supp-adagrad-sums} gives
\begin{equation*}
    \sum_{t=0}^{t_L} \eta_t^2 \norm{\bm{g}_t}^2
    = \eta^2\sum_{t=0}^{t_L} \frac{\norm{\bm{g}_t}^2}{S_t}
    \leq \eta^2 \log \left( \frac{S_{t_L}}{\delta} \right)
    \leq \eta^2 \log(\bar{S}/\delta),
\end{equation*}
where the last inequality holds due to $S_{t_L}=\eta^2/\eta_t^2<4\eta^2L^2\leq \bar{S}$.
Combining the prefix and tail bounds gives
\begin{equation*}
    \sum_{t=0}^{T-1}\eta_t^2\norm{\bm{g}_t}^2
    \leq \eta^2\log(\bar{S}/\delta) + \sum_{t=0}^{T-1}\eta_t\ip{\bm{g}_t}{\bm{\lambda}_t-\bm{\lambda}_\star}.
\end{equation*}
Using \eqref{eq:supp-basic-regret} with $\bm{\lambda}=\bm{\lambda}_\star$ on the last term yields
\begin{equation*}
    \sum_{t=0}^{T-1}\eta_t^2\norm{\bm{g}_t}^2
    \leq \eta^2\log(\bar{S}/\delta) + \frac{1}{2}\norm{\bm{\lambda}_0-\bm{\lambda}_\star}^2 + \frac{1}{2}\sum_{t=0}^{T-1}\eta_t^2\norm{\bm{g}_t}^2,
\end{equation*}
and therefore
\begin{equation}\label{eq:adagrad-cvx-2}
    \sum_{t=0}^{T-1}\eta_t^2\norm{\bm{g}_t}^2 \leq \norm{\bm{\lambda}_0-\bm{\lambda}_\star}^2 + 2\eta^2\log(\bar{S}/\delta).
\end{equation}
Combining the above inequality, \eqref{eq:adagrad-cvx-1}, and \eqref{eq:supp-basic-regret} further give
\begin{equation*}
    \sum_{t=0}^{T-1}\eta_t\norm{\bm{g}_t}^2
    \leq 2L \sum_{t=0}^{T-1}\eta_t\ip{\bm{g}_t}{\bm{\lambda}_t-\bm{\lambda}_\star}
    \leq L\left( \norm{\bm{\lambda}_0-\bm{\lambda}_\star}^2 + \sum_{t=0}^{T-1}\eta_t^2\norm{\bm{g}_t}^2 \right)
    \leq 2LB,
\end{equation*}
which together with $\eta_t=\eta/\sqrt{S_t}$ implies
\begin{equation*}
    \sum_{t=0}^{T-1}\frac{\norm{\bm{g}_t}^2}{\sqrt{S_t}}\leq\frac{2LB}{\eta}.
\end{equation*}
On the other hand, the lower bound in \eqref{eq:supp-sum} gives
\begin{equation*}
    \sqrt{S_t}\leq\sqrt{S_{T-1}}
    \leq \sqrt{\delta}+\sum_{t=0}^{T-1}\frac{\norm{\bm{g}_t}^2}{\sqrt{S_t}}
    \leq \sqrt{\delta}+\frac{2LB}{\eta},~\forall\;0\leq t\leq T-1.
\end{equation*}
Combining this bound and \eqref{eq:adagrad-cvx-2} with Proposition~\ref{prop:supp-basic-recovery} yields
\begin{align}
    g(\bar{\bm{\lambda}}_T)-g_\star
    &\leq \frac{B}{A_T}, \label{eq:supp-pd-adagrad-sm-dual} \\
    f(\bar{\bm{x}}_T)-f_\star
    &\leq \frac{\norm{\bm{\lambda}_0}^2+2B}{2A_T}, \label{eq:supp-pd-adagrad-sm-primal} \\
    \dist(\bm{A}\bar{\bm{x}}_T-\bm{b},\mathcal{K})
    &\leq \frac{1}{A_T}\left( 2\norm{\bm{\lambda}_0-\bm{\lambda}_\star}+\sqrt{2B} \right), \label{eq:supp-pd-adagrad-sm-feasibility}
\end{align}
where
\begin{equation}
    \label{eq:supp-pd-adagrad-sm-AT}
    A_T = \sum_{t = 0}^{T-1} \eta_t = \sum_{t = 0}^{T-1} \frac{\eta}{\sqrt{S_t}} \geq \sum_{t = 0}^{T-1} \frac{\eta}{\sqrt{\delta} + 2LB/\eta} = \frac{\eta T}{\sqrt{\delta} + 2LB/\eta}.
\end{equation}
Since both $\bar{S}$ and $B$ are independent of $T$, \eqref{eq:supp-pd-adagrad-sm-dual}--\eqref{eq:supp-pd-adagrad-sm-AT} show that all three residuals are $\mathcal{O}(T^{-1})$.
\end{proof}

\subsection{Analysis of Primal-Dual AcceleGrad}
\label{app:pd-accelegrad-analysis}

The weights in Algorithm~\ref{alg:pd-accelegrad} are obtained from
\begin{equation}
    \label{eq:supp-alpha-recurrence}
    \alpha_t^2-\alpha_t=\alpha_{t-1}^2,
    \qquad \alpha_{-1}=0,
\end{equation}
which is equivalent to
\begin{equation*}
    \alpha_t = \frac{1+\sqrt{1+4\alpha_{t-1}^2}}{2}, \qquad t \geq 0.
\end{equation*}
In particular, $\alpha_0=1$, and induction gives
\begin{equation}
    \label{eq:supp-alpha-range}
    \frac{t+2}{2} \leq \alpha_t \leq t+1, \qquad t \geq 0.
\end{equation}
Therefore, for every $T\geq1$,
\begin{equation}
    \label{eq:supp-alpha-sums}
    \begin{aligned}
        & A_T := \sum_{t=0}^{T-1} \alpha_t = \sum_{t=0}^{T-1}(\alpha_t^2-\alpha_{t-1}^2) = \alpha_{T-1}^2 \geq \frac{(T+1)^2}{4},\\
        & H_T := \sum_{t=0}^{T-1}\alpha_t^2\leq\sum_{t=1}^{T} t^2=\frac{T(T+1)(2T+1)}{6}.
    \end{aligned}
\end{equation}

\begin{lemma}[Accelerated adaptive regret]
\label{lem:supp-acc-regret}
For the iterates of Algorithm~\ref{alg:pd-accelegrad}, define
\begin{equation*}
    \Delta_t := g(\hat{\bm{\lambda}}_{t+1})-\left( g(\bm{\lambda}_t) + \ip{\bm{g}_t}{\hat{\bm{\lambda}}_{t+1}-\bm{\lambda}_t} + \frac{1}{2\eta_t} \norm{\hat{\bm{\lambda}}_{t+1}-\bm{\lambda}_t}^2 \right).
\end{equation*}
Then, for every $\bm{\lambda}\in\mathcal{D}$,
\begin{equation}
    \label{eq:supp-acc-regret}
    \alpha_{T-1}^2g(\hat{\bm{\lambda}}_T)
    \leq \frac{D^2}{2\eta_{T-1}} + \sum_{t=0}^{T-1}\alpha_t^2\Delta_t + \sum_{t=0}^{T-1}\alpha_t
      \left( g(\bm{\lambda}_t) - \ip{\bm{g}_t}{\bm{\lambda}_t-\bm{\lambda}} \right).
\end{equation}
\end{lemma}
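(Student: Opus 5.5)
The plan is the standard AcceleGrad potential argument: combine an exact one-step expansion for the $\hat{\bm{\lambda}}$-sequence, a projected mirror-step inequality for the $\tilde{\bm{\lambda}}$-sequence, and the coupling $\bm{\lambda}_t=\tau_t\tilde{\bm{\lambda}}_t+(1-\tau_t)\hat{\bm{\lambda}}_t$. This yields a per-step bound on $\alpha_t^2 g(\hat{\bm{\lambda}}_{t+1})-\alpha_{t-1}^2 g(\hat{\bm{\lambda}}_t)$, which I then sum using the recurrence \eqref{eq:supp-alpha-recurrence} and $\alpha_{-1}=0$.

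\emph{Step 1 (gradient step).} Substitute $\hat{\bm{\lambda}}_{t+1}-\bm{\lambda}_t=-\eta_t\bm{g}_t$ into the definition of $\Delta_t$. This gives the identity
\begin{equation*}
g(\hat{\bm{\lambda}}_{t+1})=g(\bm{\lambda}_t)-\tfrac{\eta_t}{2}\norm{\bm{g}_t}^2+\Delta_t .
\end{equation*}
Multiplying by $\alpha_t^2$ and splitting $\alpha_t^2=\alpha_t+\alpha_{t-1}^2$ by \eqref{eq:supp-alpha-recurrence}, I get
\begin{equation*}
\alpha_t^2 g(\hat{\bm{\lambda}}_{t+1})-\alpha_{t-1}^2 g(\hat{\bm{\lambda}}_t)=\alpha_t g(\bm{\lambda}_t)+\alpha_{t-1}^2\big(g(\bm{\lambda}_t)-g(\hat{\bm{\lambda}}_t)\big)-\tfrac{\alpha_t^2\eta_t}{2}\norm{\bm{g}_t}^2+\alpha_t^2\Delta_t .
\end{equation*}

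\emph{Step 2 (coupling).} Since $\tau_t=1/\alpha_t$, the definition of $\bm{\lambda}_t$ gives $\tilde{\bm{\lambda}}_t-\bm{\lambda}_t=(\alpha_t-1)(\bm{\lambda}_t-\hat{\bm{\lambda}}_t)$, and $\alpha_t(\alpha_t-1)=\alpha_{t-1}^2$. Hence
\begin{equation*}
\alpha_t\ip{\bm{g}_t}{\bm{\lambda}_t-\bm{\lambda}}=\alpha_t\ip{\bm{g}_t}{\tilde{\bm{\lambda}}_t-\bm{\lambda}}-\alpha_{t-1}^2\ip{\bm{g}_t}{\bm{\lambda}_t-\hat{\bm{\lambda}}_t}.
\end{equation*}
Convexity gives $\ip{\bm{g}_t}{\bm{\lambda}_t-\hat{\bm{\lambda}}_t}\ge g(\bm{\lambda}_t)-g(\hat{\bm{\lambda}}_t)$, so
\begin{equation*}
\alpha_{t-1}^2\big(g(\bm{\lambda}_t)-g(\hat{\bm{\lambda}}_t)\big)\le \alpha_t\ip{\bm{g}_t}{\tilde{\bm{\lambda}}_t-\bm{\lambda}}-\alpha_t\ip{\bm{g}_t}{\bm{\lambda}_t-\bm{\lambda}}.
\end{equation*}
Inserting this into Step 1 produces the term $\alpha_t\big(g(\bm{\lambda}_t)-\ip{\bm{g}_t}{\bm{\lambda}_t-\bm{\lambda}}\big)$ appearing in \eqref{eq:supp-acc-regret}, plus the mirror term $\alpha_t\ip{\bm{g}_t}{\tilde{\bm{\lambda}}_t-\bm{\lambda}}$.

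\emph{Step 3 (mirror step).} For $\bm{\lambda}\in\mathcal{D}$, the projection inequality for $\tilde{\bm{\lambda}}_{t+1}=\proj_{\mathcal{D}}(\tilde{\bm{\lambda}}_t-\alpha_t\eta_t\bm{g}_t)$ gives the standard bound
\begin{equation*}
\alpha_t\ip{\bm{g}_t}{\tilde{\bm{\lambda}}_t-\bm{\lambda}}\le\frac{\norm{\tilde{\bm{\lambda}}_t-\bm{\lambda}}^2-\norm{\tilde{\bm{\lambda}}_{t+1}-\bm{\lambda}}^2}{2\eta_t}+\frac{\alpha_t^2\eta_t}{2}\norm{\bm{g}_t}^2 .
\end{equation*}
This follows by expanding $\norm{\tilde{\bm{\lambda}}_{t+1}-\bm{\lambda}}^2$ through the variational inequality of the projection. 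The last term cancels exactly against $-\tfrac{\alpha_t^2\eta_t}{2}\norm{\bm{g}_t}^2$ from Step 1. This cancellation is precisely why the accumulator uses the weights $\alpha_s^2\norm{\bm{g}_s}^2$.

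\emph{Step 4 (summation).} Summing over $t=0,\dots,T-1$, the left side telescopes to $\alpha_{T-1}^2 g(\hat{\bm{\lambda}}_T)$ because $\alpha_{-1}=0$. The remaining step is to bound the weighted telescoping sum
\begin{equation*}
\sum_{t=0}^{T-1}\frac{1}{2\eta_t}\Big(\norm{\tilde{\bm{\lambda}}_t-\bm{\lambda}}^2-\norm{\tilde{\bm{\lambda}}_{t+1}-\bm{\lambda}}^2\Big).
\end{equation*}
Summation by parts rewrites it as
\begin{equation*}
\frac{\norm{\tilde{\bm{\lambda}}_0-\bm{\lambda}}^2}{2\eta_0}+\sum_{t=1}^{T-1}\Big(\frac{1}{2\eta_t}-\frac{1}{2\eta_{t-1}}\Big)\norm{\tilde{\bm{\lambda}}_t-\bm{\lambda}}^2-\frac{\norm{\tilde{\bm{\lambda}}_T-\bm{\lambda}}^2}{2\eta_{T-1}} .
\end{equation*}
Since $S_t$ is nondecreasing, $1/\eta_t$ is nondecreasing. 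All $\tilde{\bm{\lambda}}_t$ and $\bm{\lambda}$ lie in $\mathcal{D}$, so $\norm{\tilde{\bm{\lambda}}_t-\bm{\lambda}}\le D$. Dropping the last negative term and bounding each squared distance by $D^2$ then gives at most $D^2/(2\eta_{T-1})$, which yields \eqref{eq:supp-acc-regret}.

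I expect the main care to be needed in Step 2: getting the signs and the identity $\alpha_t(\alpha_t-1)=\alpha_{t-1}^2$ aligned so that the convexity step goes in the right direction. Step 4 only needs the monotonicity of $\eta_t$ and membership of the iterates in $\mathcal{D}$.
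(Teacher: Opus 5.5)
Your proof is correct and is essentially the paper's argument: the coupling identity plus convexity at $\hat{\bm{\lambda}}_t$, the projection inequality for $\tilde{\bm{\lambda}}_{t+1}$, and summation by parts with nonincreasing $\eta_t$. The only cosmetic difference is that you bound the mirror step by nonexpansiveness and cancel the leftover $\tfrac{\alpha_t^2\eta_t}{2}\norm{\bm{g}_t}^2$ against the exact gradient-step identity, whereas the paper keeps $-\tfrac{1}{2\eta_t}\norm{\tilde{\bm{\lambda}}_t-\tilde{\bm{\lambda}}_{t+1}}^2$ and maximizes over the auxiliary point $\bm{\mu}_t$, which yields the same quantity.

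One correction to your side remark: the cancellation comes from the mirror step size being $\alpha_t\eta_t$ with the same $\eta_t$ as the gradient step, and it holds for any positive $\eta_t$. The $\alpha_s^2$ weights in $S_t$ only matter later, when $\sum_t\alpha_t^2\Delta_t$ and $D^2/(2\eta_{T-1})$ are bounded in the proof of Theorem~\ref{thm:pd-accelegrad}.
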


\begin{proof}
The update rules imply
\begin{equation}
    \label{eq:supp-acc-id-1}
    \bm{\lambda}_t-\tilde{\bm{\lambda}}_t = (\alpha_t-1)(\hat{\bm{\lambda}}_t-\bm{\lambda}_t).
\end{equation}
For any $\bm{\lambda}\in\mathcal{D}$, decompose
\begin{equation}
    \label{eq:supp-acc-decomp}
    \alpha_t\ip{\bm{g}_t}{\bm{\lambda}_t-\bm{\lambda}}
    = \alpha_t\ip{\bm{g}_t}{\bm{\lambda}_t-\tilde{\bm{\lambda}}_t} + \alpha_t\ip{\bm{g}_t}{\tilde{\bm{\lambda}}_t-\bm{\lambda}}.
\end{equation}
Using \eqref{eq:supp-acc-id-1} and $\bm{g}_t\in\partial g(\bm{\lambda}_t)$, the first term on the right-hand side of \eqref{eq:supp-acc-decomp} satisfies
\begin{align}
    \label{eq:supp-acc-first-term}
    \alpha_t\ip{\bm{g}_t}{\bm{\lambda}_t-\tilde{\bm{\lambda}}_t}
    & = (\alpha_t^2-\alpha_t) \ip{\bm{g}_t}{\hat{\bm{\lambda}}_t-\bm{\lambda}_t} \notag \\
    & \leq (\alpha_t^2-\alpha_t) \bigl(g(\hat{\bm{\lambda}}_t)-g(\bm{\lambda}_t)\bigr) \notag \\
    & = (\alpha_t^2-\alpha_t) \bigl(g(\hat{\bm{\lambda}}_t)-g(\hat{\bm{\lambda}}_{t+1})\bigr) 
        + (\alpha_t^2-\alpha_t) \bigl(g(\hat{\bm{\lambda}}_{t+1})-g(\bm{\lambda}_t)\bigr).
\end{align}
For the second term, the projection step gives
\begin{equation*}
    \ip{ \tilde{\bm{\lambda}}_t-\alpha_t\eta_t\bm{g}_t - \tilde{\bm{\lambda}}_{t+1}}{\bm{\lambda}-\tilde{\bm{\lambda}}_{t+1}} \leq 0,
    \qquad \forall\,\bm{\lambda}\in\mathcal{D}.
\end{equation*}
Therefore,
\begin{equation}
    \label{eq:supp-proj-var}
    \alpha_t \eta_t \ip{\bm{g}_t}{\tilde{\bm{\lambda}}_{t+1}-\bm{\lambda}}
    \leq \ip{\tilde{\bm{\lambda}}_t-\tilde{\bm{\lambda}}_{t+1}}{\tilde{\bm{\lambda}}_{t+1}-\bm{\lambda}}.
\end{equation}
Using \eqref{eq:supp-proj-var} and the identity $2\ip{a-b}{b-c}=\norm{a-c}^2-\norm{a-b}^2-\norm{b-c}^2$, we obtain
\begin{align}
    \label{eq:supp-acc-second-term}
    \alpha_t\ip{\bm{g}_t}{\tilde{\bm{\lambda}}_t-\bm{\lambda}}
    & \leq \alpha_t \ip{\bm{g}_t}{\tilde{\bm{\lambda}}_t-\tilde{\bm{\lambda}}_{t+1}} + \frac{1}{2\eta_t} \left( \norm{\tilde{\bm{\lambda}}_t-\bm{\lambda}}^2 - \norm{\tilde{\bm{\lambda}}_{t+1}-\bm{\lambda}}^2 - \norm{\tilde{\bm{\lambda}}_t-\tilde{\bm{\lambda}}_{t+1}}^2 \right).
\end{align}
Let $\bm{\mu}_t:=\tau_t\tilde{\bm{\lambda}}_{t+1} + (1-\tau_t)\hat{\bm{\lambda}}_t$.
Then $\bm{\lambda}_t-\bm{\mu}_t = \tau_t(\tilde{\bm{\lambda}}_t-\tilde{\bm{\lambda}}_{t+1})$. 
Since $\tau_t=1/\alpha_t$, the first and last terms in \eqref{eq:supp-acc-second-term} satisfy
\begin{align}
    \label{eq:supp-acc-model-step}
    \alpha_t \ip{\bm{g}_t}{\tilde{\bm{\lambda}}_t-\tilde{\bm{\lambda}}_{t+1}}
      - \frac{1}{2\eta_t} \norm{\tilde{\bm{\lambda}}_t-\tilde{\bm{\lambda}}_{t+1}}^2 
    & = \alpha_t^2\left( \ip{\bm{g}_t}{\bm{\lambda}_t-\bm{\mu}_t} - \frac{1}{2\eta_t}\norm{\bm{\lambda}_t-\bm{\mu}_t}^2 \right)\notag\\
    & \leq \max_{\bm{\mu}\in\mathbb{R}^n} \alpha_t^2\left( \ip{\bm{g}_t}{\bm{\lambda}_t-\bm{\mu}} - \frac{1}{2\eta_t}\norm{\bm{\lambda}_t-\bm{\mu}}^2 \right)\notag\\
    & = \alpha_t^2 \left( \ip{\bm{g}_t}{\bm{\lambda}_t-\hat{\bm{\lambda}}_{t+1}} - \frac{1}{2\eta_t} \norm{\bm{\lambda}_t-\hat{\bm{\lambda}}_{t+1}}^2 \right),
\end{align}
where the last equality uses $\hat{\bm{\lambda}}_{t+1}=\bm{\lambda}_t-\eta_t\bm{g}_t$. 
Combining \eqref{eq:supp-acc-decomp}--\eqref{eq:supp-acc-model-step} with the definition of $\Delta_t$ gives
\begin{align}
    \label{eq:supp-acc-one-step}
    \alpha_t \ip{\bm{g}_t}{\bm{\lambda}_t-\bm{\lambda}}
    & \leq \frac{1}{2\eta_t}\left( \norm{\tilde{\bm{\lambda}}_t-\bm{\lambda}}^2 - \norm{\tilde{\bm{\lambda}}_{t+1}-\bm{\lambda}}^2 \right) \notag \\
        & \quad + (\alpha_t^2-\alpha_t) \bigl(g(\hat{\bm{\lambda}}_t)-g(\hat{\bm{\lambda}}_{t+1})\bigr) + \alpha_t^2\Delta_t - \alpha_t \bigl(g(\hat{\bm{\lambda}}_{t+1})-g(\bm{\lambda}_t)\bigr).
\end{align}
By \eqref{eq:supp-alpha-recurrence}, this simplifies to
\begin{align}
    \label{eq:supp-acc-one-step-simplified}
    \alpha_t\ip{\bm{g}_t}{\bm{\lambda}_t-\bm{\lambda}}
    & \leq \frac{1}{2\eta_t} \left( \norm{\tilde{\bm{\lambda}}_t-\bm{\lambda}}^2 - \norm{\tilde{\bm{\lambda}}_{t+1}-\bm{\lambda}}^2 \right) \notag \\
    & \quad + \alpha_{t-1}^2g(\hat{\bm{\lambda}}_t) - \alpha_t^2 g(\hat{\bm{\lambda}}_{t+1}) + \alpha_t g(\bm{\lambda}_t) + \alpha_t^2 \Delta_t.
\end{align}
Since $S_t$ is nondecreasing, $\eta_t=2D/\sqrt{S_t}$ is nonincreasing. 
Hence, for $\tilde{\bm{\lambda}}_t,\bm{\lambda}\in \mathcal{D}$,
\begin{align}
    & \sum_{t=0}^{T-1}\frac{1}{2\eta_t}\left( \norm{\tilde{\bm{\lambda}}_t-\bm{\lambda}}^2 - \norm{\tilde{\bm{\lambda}}_{t+1}-\bm{\lambda}}^2 \right) \notag \\
    & \quad = \frac{\norm{\tilde{\bm{\lambda}}_0-\bm{\lambda}}^2}{2\eta_0}
      + \sum_{t=1}^{T-1} \left(\frac{1}{2\eta_t}-\frac{1}{2\eta_{t-1}}\right) \norm{\tilde{\bm{\lambda}}_t-\bm{\lambda}}^2 - \frac{\norm{\tilde{\bm{\lambda}}_T-\bm{\lambda}}^2} {2\eta_{T-1}}\leq\frac{D^2}{2\eta_{T-1}}.
    \label{eq:supp-acc-distance-telescope}
\end{align}
Now summing \eqref{eq:supp-acc-one-step-simplified} from $t=0$ to $T-1$ and noting \eqref{eq:supp-acc-distance-telescope} and $\alpha_{-1}=0$, we obtain
\begin{equation*}
    \sum_{t=0}^{T-1}\alpha_t\ip{\bm{g}_t}{\bm{\lambda}_t-\bm{\lambda}}\leq \frac{D^2}{2\eta_{T-1}}-\alpha_{T-1}^2 g(\hat{\bm{\lambda}}_T)+\sum_{t=0}^{T-1}\alpha_t^2\Delta_t+\sum_{t=0}^{T-1}\alpha_t g(\bm{\lambda}_t).
\end{equation*}
Rearranging the above gives the desired result \eqref{eq:supp-acc-regret}.
\end{proof}

\begin{proposition}[Accelerated primal-dual recovery]
\label{prop:supp-acc-recovery}
Suppose $\bm{0}\in\mathcal{D}$, and suppose $\bm{\lambda}_\star+\rho\mathbb{B}\subseteq\mathcal{D}$ for some $\rho>0$.
Then for any $T\ge 1$, we have
\begin{align}
    g(\hat{\bm{\lambda}}_T)-g_\star & \leq\frac{1}{A_T}\left( \frac{D^2}{2\eta_{T-1}} + \sum_{t=0}^{T-1}\alpha_t^2\Delta_t \right), \label{eq:supp-acc-dual} \\
    f(\bar{\bm{x}}_T)-f_\star & \leq \frac{1}{A_T}\left( \frac{D^2}{2\eta_{T-1}} + \sum_{t=0}^{T-1}\alpha_t^2\Delta_t \right), \label{eq:supp-acc-primal} \\
    \dist(\bm{A}\bar{\bm{x}}_T-\bm{b},\mathcal{K}) & \leq \frac{1}{\rho A_T} \left( \frac{D^2}{2\eta_{T-1}} + \sum_{t=0}^{T-1}\alpha_t^2\Delta_t \right). \label{eq:supp-acc-feasibility}
\end{align}
\end{proposition}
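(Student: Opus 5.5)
The plan is to use Lemma~\ref{lem:supp-acc-regret} as the single engine and specialize the free point $\bm{\lambda}\in\mathcal{D}$, mirroring the proof of Proposition~\ref{prop:supp-basic-recovery}. Write $R_T:=\frac{D^2}{2\eta_{T-1}}+\sum_{t=0}^{T-1}\alpha_t^2\Delta_t$. Recall from \eqref{eq:supp-alpha-sums} that $A_T=\alpha_{T-1}^2$. The key identity is \eqref{eq:g-f-equality}, which holds verbatim at the points $\bm{\lambda}_t$ of Algorithm~\ref{alg:pd-accelegrad}:
\[
g(\bm{\lambda}_t)-\ip{\bm{g}_t}{\bm{\lambda}_t-\bm{\lambda}} = -f(\bm{x}^*_t)+\ip{\bm{g}_t}{\bm{\lambda}}.
\]
Substituting this into \eqref{eq:supp-acc-regret} gives, for every $\bm{\lambda}\in\mathcal{D}$,
\[
A_T\, g(\hat{\bm{\lambda}}_T)+\sum_{t=0}^{T-1}\alpha_t f(\bm{x}^*_t) \le R_T + A_T\ip{\bar{\bm{g}}_T}{\bm{\lambda}},
\]
where $\bar{\bm{g}}_T:=A_T^{-1}\sum_t\alpha_t\bm{g}_t=-\bm{A}\bar{\bm{x}}_T+\bm{b}+\bar{\bm{r}}_T$ and $\bar{\bm{r}}_T\in\mathcal{K}$ is the $\alpha$-weighted average of the $\bm{r}^*_t$. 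By Jensen's inequality, $\sum_t\alpha_t f(\bm{x}^*_t)\ge A_T f(\bar{\bm{x}}_T)$. Hence the master inequality is
\[
g(\hat{\bm{\lambda}}_T)+f(\bar{\bm{x}}_T)\le \frac{R_T}{A_T}+\ip{\bar{\bm{g}}_T}{\bm{\lambda}}, \qquad \bm{\lambda}\in\mathcal{D}.
\]

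The three bounds then follow from three choices of $\bm{\lambda}$.

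\textbf{Dual bound.} Take $\bm{\lambda}=\bm{\lambda}_\star\in\mathcal{D}$. The saddle-point inequality \eqref{eqn:saddle-point-inequalty} evaluated at $(\bar{\bm{x}}_T,\bar{\bm{r}}_T)$ gives $\ip{\bar{\bm{g}}_T}{\bm{\lambda}_\star}\le f(\bar{\bm{x}}_T)-f_\star$. Cancelling $f(\bar{\bm{x}}_T)$ and using $f_\star=-g_\star$ yields \eqref{eq:supp-acc-dual}.

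\textbf{Primal bound.} Take $\bm{\lambda}=\bm{0}\in\mathcal{D}$. Then $f(\bar{\bm{x}}_T)\le R_T/A_T-g(\hat{\bm{\lambda}}_T)\le R_T/A_T+f_\star$, since $g(\hat{\bm{\lambda}}_T)\ge g_\star=-f_\star$. This is \eqref{eq:supp-acc-primal}.

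\textbf{Feasibility bound.} Take $\bm{\lambda}=\bm{\lambda}_\star-\rho\,\bar{\bm{g}}_T/\norm{\bar{\bm{g}}_T}$; the case $\bar{\bm{g}}_T=\bm{0}$ is trivial. This point lies in $\bm{\lambda}_\star+\rho\mathbb{B}\subseteq\mathcal{D}$. The master inequality becomes
\[
g(\hat{\bm{\lambda}}_T)+f(\bar{\bm{x}}_T)\le R_T/A_T+\ip{\bar{\bm{g}}_T}{\bm{\lambda}_\star}-\rho\norm{\bar{\bm{g}}_T}.
\]
Lower-bound the left side by $g_\star+f(\bar{\bm{x}}_T)$. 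Upper-bound $\ip{\bar{\bm{g}}_T}{\bm{\lambda}_\star}$ by $f(\bar{\bm{x}}_T)-f_\star$ via the saddle-point inequality again. After cancellation, $\rho\norm{\bar{\bm{g}}_T}\le R_T/A_T$. Since $\dist(\bm{A}\bar{\bm{x}}_T-\bm{b},\mathcal{K})\le\norm{\bar{\bm{g}}_T}$ (because $\bar{\bm{r}}_T\in\mathcal{K}$), this gives \eqref{eq:supp-acc-feasibility}.

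The only delicate point is the bookkeeping that turns the regret sum in Lemma~\ref{lem:supp-acc-regret} into primal quantities. Specifically, one must verify that $\sum_t\alpha_t g(\bm{\lambda}_t)$ and the linear terms combine exactly into $-\sum_t\alpha_t f(\bm{x}^*_t)+A_T\ip{\bar{\bm{g}}_T}{\bm{\lambda}}$ with weights matching the definition of $\bar{\bm{x}}_T$. Equally, the sum $\sum_t\alpha_t$ must coincide with the coefficient $\alpha_{T-1}^2$ multiplying $g(\hat{\bm{\lambda}}_T)$; this is exactly the identity $A_T=\alpha_{T-1}^2$ in \eqref{eq:supp-alpha-sums}. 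Once this is in place, each of the three bounds is a one-line consequence of strong duality and the saddle-point inequality. The key design feature is that the feasibility choice of $\bm{\lambda}$ must stay inside $\mathcal{D}$, which is exactly what the assumption $\bm{\lambda}_\star+\rho\mathbb{B}\subseteq\mathcal{D}$ provides.
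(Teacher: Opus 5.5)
Your proof is correct and follows essentially the same route as the paper. Both arguments combine Lemma~\ref{lem:supp-acc-regret} with the identity $g(\bm{\lambda}_t)=-f(\bm{x}^*_t)+\ip{\bm{g}_t}{\bm{\lambda}_t}$, Jensen's inequality, strong duality, the saddle-point inequality, and the choices $\bm{\lambda}=\bm{0}$ and $\bm{\lambda}=\bm{\lambda}_\star-\rho\bar{\bm{g}}_T/\norm{\bar{\bm{g}}_T}$. The one minor difference is the dual bound: the paper gets it directly from the subgradient inequality $g(\bm{\lambda}_t)-\ip{\bm{g}_t}{\bm{\lambda}_t-\bm{\lambda}_\star}\le g_\star$, whereas you route it through your primal master inequality and the saddle-point inequality, which is equally valid.
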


\begin{proof}
For any $\bm{\lambda}\in\mathcal{D}$, subtracting $\alpha_{T-1}^2g(\bm{\lambda})$ from both sides of \eqref{eq:supp-acc-regret} and using $A_T=\alpha_{T-1}^2$ gives
\begin{align}
    \alpha_{T-1}^2 \bigl(g(\hat{\bm{\lambda}}_T)-g(\bm{\lambda})\bigr)
    \leq \frac{D^2}{2\eta_{T-1}} + \sum_{t=0}^{T-1} \alpha_t^2\Delta_t+\sum_{t=0}^{T-1}\alpha_t\left( g(\bm{\lambda}_t)-g(\bm{\lambda}) - \ip{\bm{g}_t}{\bm{\lambda}_t-\bm{\lambda}} \right).
    \label{eq:supp-acc-recovery-key}
\end{align}
By convexity and $\bm{g}_t\in\partial g(\bm{\lambda}_t)$, the last sum is nonpositive. 
Hence, taking $\bm{\lambda}=\bm{\lambda}_\star$ in \eqref{eq:supp-acc-recovery-key} proves \eqref{eq:supp-acc-dual}.

By the definitions of $g$ and $\bm{g}_t$,
\begin{equation*}
    g(\bm{\lambda}_t) = -f(\bm{x}_t^*) + \ip{\bm{g}_t}{\bm{\lambda}_t}.
\end{equation*}
Using this identity to rearrange \eqref{eq:supp-acc-regret} gives, for every $\bm{\lambda} \in \mathcal{D}$,
\begin{align}
    \sum_{t=0}^{T-1}\alpha_t f(\bm{x}_t^*)
    & \leq \frac{D^2}{2\eta_{T-1}} + \sum_{t=0}^{T-1}\alpha_t^2\Delta_t - \alpha_{T-1}^2g(\hat{\bm{\lambda}}_T) + \sum_{t=0}^{T-1}\alpha_t\ip{\bm{g}_t}{\bm{\lambda}}\notag\\
    & \leq \frac{D^2}{2\eta_{T-1}} + \sum_{t=0}^{T-1}\alpha_t^2\Delta_t + A_Tf_\star + \sum_{t=0}^{T-1}\alpha_t\ip{\bm{g}_t}{\bm{\lambda}},
    \label{eq:supp-acc-primal-key}
\end{align}
where the last inequality follows from $g(\hat{\bm{\lambda}}_T)\geq g_\star=-f_\star$ and $A_T=\alpha_{T-1}^2$.
Taking $\bm{\lambda}=\bm{0}$ and applying Jensen's inequality proves \eqref{eq:supp-acc-primal}.

For feasibility, let $\bar{\bm{r}}_T=A_T^{-1}\sum_{t=0}^{T-1}\alpha_t\bm{r}^*_t\in\mathcal{K}$ and define
\begin{equation*}
    \bar{\bm{g}}_T := \frac{1}{A_T}\sum_{t=0}^{T-1}\alpha_t\bm{g}_t
    = -\bm{A}\bar{\bm{x}}_T+\bm{b}+\bar{\bm{r}}_T.
\end{equation*}
Applying Jensen's inequality on \eqref{eq:supp-acc-primal-key} gives, for every $\bm{\lambda}\in\mathcal{D}$,
\begin{equation*}
    f(\bar{\bm{x}}_T)-f_\star
    \leq \ip{\bar{\bm{g}}_T}{\bm{\lambda}} + \frac{1}{A_T}\left( \frac{D^2}{2\eta_{T-1}} + \sum_{t=0}^{T-1}\alpha_t^2\Delta_t \right).
\end{equation*}
The saddle-point inequality gives $\ip{\bar{\bm{g}}_T}{\bm{\lambda}_\star} \leq f(\bar{\bm{x}}_T)-f_\star$. 
Thus
\begin{equation*}
    \ip{\bar{\bm{g}}_T}{\bm{\lambda}_\star-\bm{\lambda}}
    \leq \frac{1}{A_T}\left( \frac{D^2}{2\eta_{T-1}} + \sum_{t=0}^{T-1}\alpha_t^2\Delta_t \right),
    \qquad \forall\,\bm{\lambda}\in\mathcal{D}.
\end{equation*}
If $\bar{\bm{g}}_T=\bm{0}$, the feasibility bound is immediate.
Otherwise, choose $\bm{\lambda}=\bm{\lambda}_\star - \rho\bar{\bm{g}}_T/\norm{\bar{\bm{g}}_T}\in\mathcal{D}$.
This gives
\begin{equation*}
    \rho \norm{\bar{\bm{g}}_T}
    \leq \frac{1}{A_T}\left( \frac{D^2}{2\eta_{T-1}} + \sum_{t=0}^{T-1}\alpha_t^2\Delta_t \right).
\end{equation*}
Finally, $\bar{\bm{r}}_T\in\mathcal{K}$ and $\bar{\bm{g}}_T=-\bm{A}\bar{\bm{x}}_T+\bm{b}+\bar{\bm{r}}_T$, so
\begin{equation*}
    \dist(\bm{A}\bar{\bm{x}}_T-\bm{b},\mathcal{K})
    \leq \norm{\bar{\bm{g}}_T} 
    \leq \frac{1}{\rho A_T} \left( \frac{D^2}{2\eta_{T-1}} + \sum_{t=0}^{T-1}\alpha_t^2\Delta_t \right),
\end{equation*}
which proves \eqref{eq:supp-acc-feasibility}. 
The proof is now complete.
\end{proof}

\begin{proof}\textbf{of Theorem~\ref{thm:pd-accelegrad}.}
We first consider the Lipschitz case. 
Since $\hat{\bm{\lambda}}_{t+1}=\bm{\lambda}_t-\eta_t\bm{g}_t$, the model error satisfies
\begin{equation}\label{eq:delta_t-sim}
    \Delta_t=g(\hat{\bm{\lambda}}_{t+1})-g(\bm{\lambda}_t)+\frac{\eta_t}{2}\norm{\bm{g}_t}^2.
\end{equation}
Because $g$ is convex and $G$-Lipschitz,
\begin{equation}
    \label{eq:supp-acc-bd-delta}
    \Delta_t \leq G\norm{\hat{\bm{\lambda}}_{t+1}-\bm{\lambda}_t} +\frac{\eta_t}{2}\norm{\bm{g}_t}^2 = G \eta_t \norm{\bm{g}_t} + \frac{\eta_t}{2}\norm{\bm{g}_t}^2 
    \leq \frac{3}{2} G \eta_t \norm{\bm{g}_t}.
\end{equation}
Let $q_t:=\alpha_t^2\norm{\bm{g}_t}^2$, so that $S_t-S_{t-1}=q_t$ with $S_{-1}:=\delta$.
Using \eqref{eq:supp-acc-bd-delta}, Cauchy--Schwarz, and Lemma~\ref{lem:supp-adagrad-sums} gives (recall definition of $H_T$ in \eqref{eq:supp-alpha-sums})
\begin{align}
    \label{eq:supp-acc-bd-model}
    \sum_{t=0}^{T-1}\alpha_t^2\Delta_t
    & = \frac{3}{2} G \sum_{t=0}^{T-1} \alpha_t^2 \eta_t \norm{\bm{g}_t} 
    = \frac{3G\eta}{2}\sum_{t=0}^{T-1} \alpha_t \frac{\sqrt{q_t}}{\sqrt{S_t}} \notag\\
    & \leq \frac{3G\eta}{2} \sqrt{H_T} \sqrt{\sum_{t=0}^{T-1}\frac{q_t}{S_t}} 
    \leq \frac{3G\eta}{2} \sqrt{H_T} \sqrt{\log\left(\frac{S_{T-1}}{\delta}\right)}.
\end{align}
Moreover, note that
\begin{equation}
    \label{eq:supp-acc-bd-terminal}
    \frac{D^2}{2\eta_{T-1}} = \frac{D^2}{2\eta} \sqrt{S_{T-1}}.
\end{equation}
Since $S_{T-1}\leq\delta+G^2H_T$, combining
\eqref{eq:supp-acc-bd-model} and \eqref{eq:supp-acc-bd-terminal} gives
\begin{equation}
    \label{eq:supp-acc-bd-numerator}
    \frac{D^2}{2\eta_{T-1}} + \sum_{t=0}^{T-1}\alpha_t^2\Delta_t
    \leq \frac{D^2}{2\eta} \sqrt{\delta+G^2H_T} + \frac{3G\eta}{2}\sqrt{H_T} \sqrt{\log\left(1+\frac{G^2H_T}{\delta}\right)}.
\end{equation}
By \eqref{eq:supp-alpha-sums}, $A_T=\Theta(T^2)$ and $H_T=\mathcal{O}(T^3)$. 
Hence $\sqrt{H_T}/A_T=\mathcal{O}(T^{-1/2})$, and Proposition~\ref{prop:supp-acc-recovery} together with \eqref{eq:supp-acc-bd-numerator} proves \eqref{eq:pd-accelegrad-nonsmooth-rate}.

We next consider the smooth case. 
From $L$-smoothness and the definition of $\Delta_t$ in \eqref{eq:delta_t-sim}, we obtain
\begin{align}
    \label{eq:supp-acc-sm-delta}
    \Delta_t & = g(\hat{\bm{\lambda}}_{t+1})-g(\bm{\lambda}_t) - \ip{\bm{g}_t}{\hat{\bm{\lambda}}_{t+1}-\bm{\lambda}_t} - \frac{1}{2\eta_t} \norm{\hat{\bm{\lambda}}_{t+1}-\bm{\lambda}_t}^2 \notag \\
    & \leq \frac{1}{2} \left(L-\frac{1}{\eta_t}\right) \norm{\hat{\bm{\lambda}}_{t+1} - \bm{\lambda}_t}^2
    = \frac{1}{2} (L\eta_t^2-\eta_t) \norm{\bm{g}_t}^2.
\end{align}
Let $t_L:=\max\{0\leq t\leq T-1:\eta_t>1/(2L)\}$ with $t_L:=-1$ if this set is empty and $S_{-1}:=\delta$.
For $t>t_L$, $L\eta_t^2-\eta_t\leq-\eta_t/2$; for $t\leq t_L$, we use $L\eta_t^2-\eta_t\leq L\eta_t^2$.
Therefore,
\begin{align}
    \label{eq:supp-acc-sm-split}
    \frac{D^2}{2\eta_{T-1}} + \sum_{t=0}^{T-1}\alpha_t^2\Delta_t\leq\frac{D^2}{2\eta_{T-1}} + \frac{L}{2}\sum_{t=0}^{t_L}\eta_t^2q_t - \frac{1}{4}\sum_{t=t_L+1}^{T-1}\eta_tq_t.
\end{align}
The first term and negative tail satisfy
\begin{align}
    \label{eq:supp-acc-sm-tail}
    \frac{D^2}{2\eta_{T-1}} - \frac{1}{4} \sum_{t=t_L+1}^{T-1} \eta_t q_t 
    & = \frac{D^2}{2\eta} \sqrt{S_{T-1}} - \frac{\eta}{4} \sum_{t=t_L+1}^{T-1} \frac{q_t}{\sqrt{S_t}} \notag \\
    & \leq \frac{D^2}{2\eta} \sqrt{S_{T-1}} - \frac{\eta}{4} \left(\sqrt{S_{T-1}}-\sqrt{S_{t_L}}\right) 
    \leq \frac{\eta}{4} \sqrt{S_{t_L}},
\end{align}
where the first inequality follows from the lower bound in \eqref{eq:supp-sum}, and the last one holds because $\eta\ge \sqrt{2}D$ implies $\frac{D^2}{2\eta}-\frac{\eta}{4}\leq 0$.
Moreover, note that
\begin{equation}
    \label{eq:supp-acc-sm-prefix}
    \frac{L}{2}\sum_{t=0}^{t_L}\eta_t^2q_t
    = \frac{L\eta^2}{2}\sum_{t=0}^{t_L}\frac{q_t}{S_t}
    \leq \frac{L\eta^2}{2} \log \left(\frac{S_{t_L}}{\delta}\right).
\end{equation}
Applying \eqref{eq:supp-acc-sm-tail} and \eqref{eq:supp-acc-sm-prefix} in \eqref{eq:supp-acc-sm-split} yields
\begin{equation*}
    \frac{D^2}{2\eta_{T-1}} + \sum_{t=0}^{T-1}\alpha_t^2\Delta_t
    \leq \frac{L\eta^2}{2}\log\left(\frac{S_{t_L}}{\delta}\right) + \frac{\eta}{4}\sqrt{S_{t_L}}.
\end{equation*}
If $t_L \geq 0$, then $\eta_{t_L} > 1/(2L)$ implies $S_{t_L} < 4 L^2 \eta^2$; if $t_L = -1$, then $S_{t_L} = \delta$.
Thus, with
\begin{equation*}
    \overline{S}:=\max\{\delta,4L^2\eta^2\},
\end{equation*}
we obtain the finite bound
\begin{equation}
    \label{eq:supp-acc-sm-numerator}
    \frac{D^2}{2\eta_{T-1}} + \sum_{t=0}^{T-1}\alpha_t^2\Delta_t
    \leq \frac{L\eta^2}{2} \log \left(\frac{\overline{S}}{\delta}\right) + \frac{\eta}{4}\sqrt{\overline{S}}.
\end{equation}
Since $A_T\geq(T+1)^2/4$, substituting \eqref{eq:supp-acc-sm-numerator} into Proposition~\ref{prop:supp-acc-recovery} proves \eqref{eq:pd-accelegrad-smooth-rate}.
\end{proof}

\section{Parameter Tuning for Experiments}
\label{sec:parameter_tuning}

For each method and dataset, we vary its scalar parameter over powers of $10$ while keeping the remaining settings fixed. 
The plots below show the relative primal objective residual and relative feasibility gap, normalized as in the main experiments, over the shorter $10^3$-iteration tuning runs.

\newcommand{\includetuningfigure}[3]{%
  \begin{figure}[H]
    \centering
    \includegraphics[width=\textwidth]{#1}
    \caption{#2}
    \label{#3}
  \end{figure}%
}

\includetuningfigure{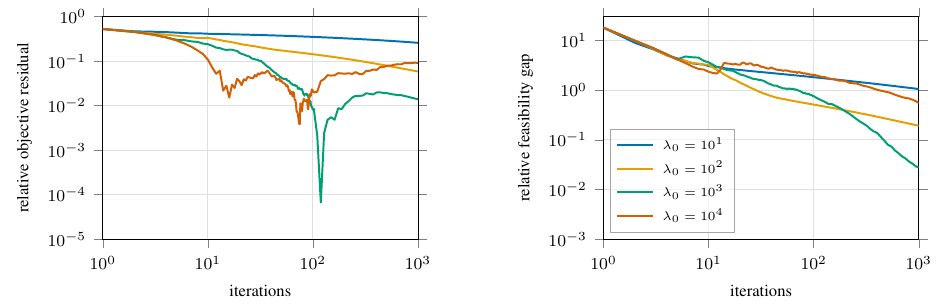}%
  {HCGM tuning on the CIFAR-10/MNIST OT instance.}{fig:ot-HCGM-tuning}
\includetuningfigure{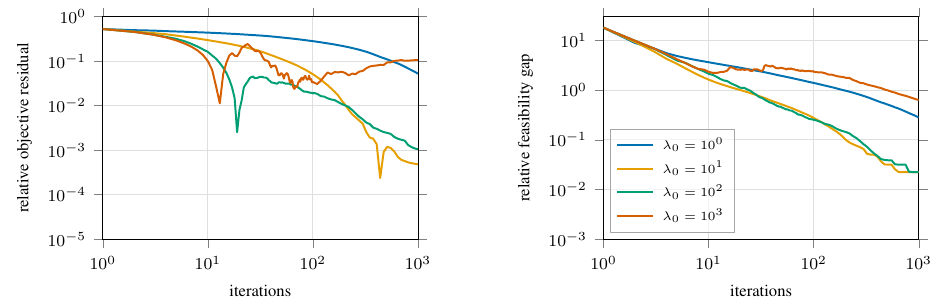}%
  {CGAL-cst tuning on the CIFAR-10/MNIST OT instance.}{fig:ot-CGAL-cst-tuning}
\includetuningfigure{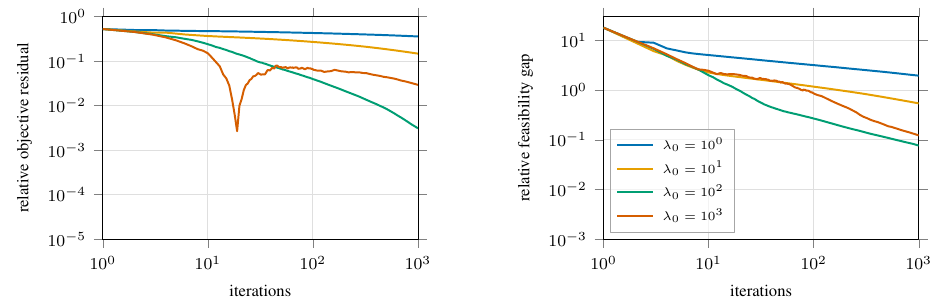}%
  {CGAL-dcr tuning on the CIFAR-10/MNIST OT instance.}{fig:ot-CGAL-dcr-tuning}
\includetuningfigure{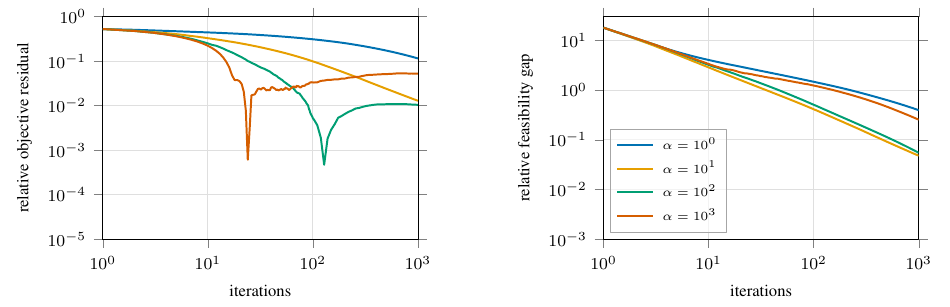}%
  {AdaPD tuning on the CIFAR-10/MNIST OT instance.}{fig:ot-AdaPD-tuning}
\includetuningfigure{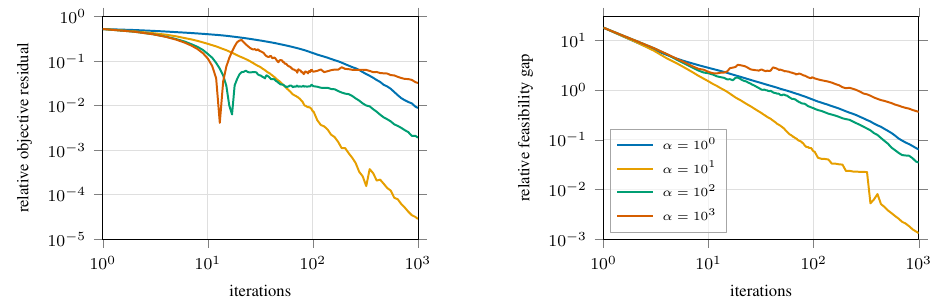}%
  {AccPD tuning on the CIFAR-10/MNIST OT instance.}{fig:ot-AccPD-tuning}

\includetuningfigure{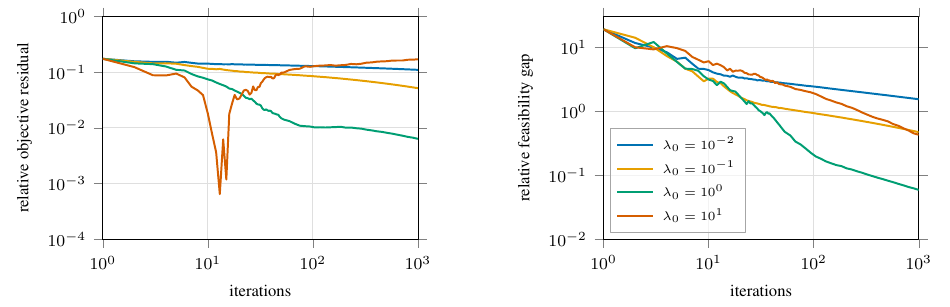}%
  {HCGM tuning on GSet G1.}{fig:HCGM_tuning}
\includetuningfigure{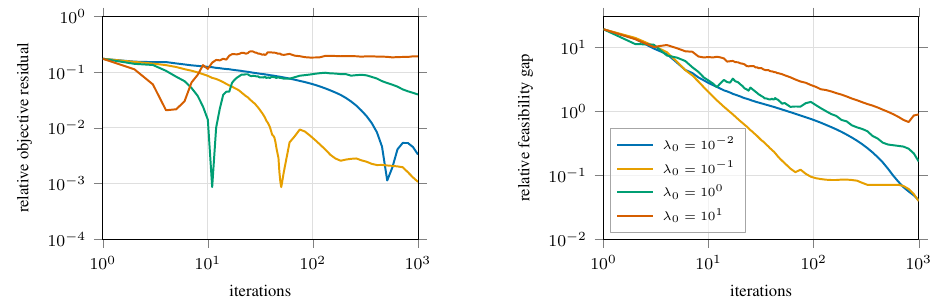}%
  {CGAL-cst tuning on GSet G1.}{fig:CGAL-cst_tuning}
\includetuningfigure{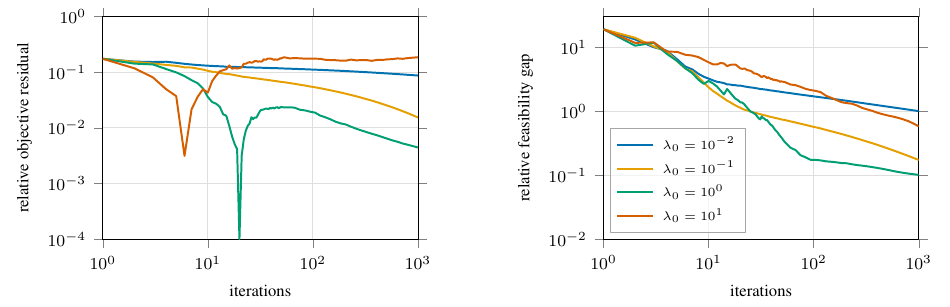}%
  {CGAL-dcr tuning on GSet G1.}{fig:CGAL-dcr_tuning}
\includetuningfigure{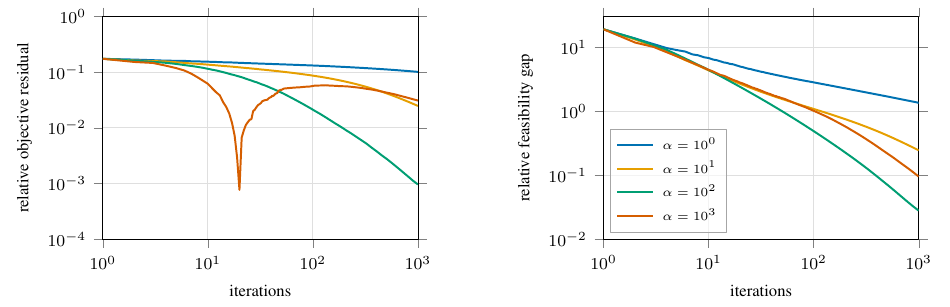}%
  {AdaPD tuning on GSet G1.}{fig:AdaPD_tuning}
\includetuningfigure{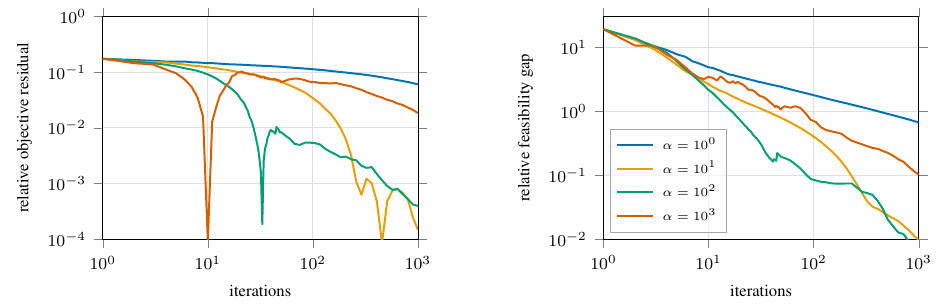}%
  {AccPD tuning on GSet G1.}{fig:AccPD_tuning}
 
\end{document}